\documentclass[amstex,12pt, amssymb]{article}

\usepackage{mathtext}
\usepackage[cp1251]{inputenc}
\usepackage[T2A]{fontenc}
\usepackage[dvips]{graphicx}
\usepackage{amsmath}
\usepackage{amssymb}
\usepackage{amsxtra}
\usepackage{latexsym}
\usepackage{ifthen}

\textheight245mm \textwidth165mm
\parindent5mm
\parskip0mm

\voffset-27.3mm \hoffset-11.5mm

\newtheorem{theorem}{Theorem}[section]
\newtheorem{lemma}{Lemma}[section]
\newtheorem{corollary}{Corollary}[section]
\newtheorem{proposition}{Proposition}[section]

\newtheorem{remark}{Remark}[section]

\numberwithin{equation}{section}

\pagestyle{myheadings}

\def\Xint#1{\mathchoice
   {\XXint\displaystyle\textstyle{#1}}%
   {\XXint\textstyle\scriptstyle{#1}}%
   {\XXint\scriptstyle\scriptscriptstyle{#1}}%
   {\XXint\scriptscriptstyle\scriptscriptstyle{#1}}%
   \!\int}
\def\XXint#1#2#3{{\setbox0=\hbox{$#1{#2#3}{\int}$}
     \vcenter{\hbox{$#2#3$}}\kern-.5\wd0}}
\def\dashint{\Xint-}

\begin{document}

\markboth{\centerline{Vladimir Ryazanov, Sergei Volkov}}
{\centerline{On Sobolev's mappings on Riemann surfaces}}

\author{{Vladimir Ryazanov, Sergei Volkov}}

\title{{\bf On Sobolev's mappings on Riemann surfaces}}

\maketitle

\large \begin{abstract} In terms of dilatations, it is proved a
series of criteria for continuous and homeomorphic extension to the
boundary of mappings with finite distortion between regular domains
on the Riemann surfaces
\end{abstract}

\bigskip
{\bf 2010 Mathematics Subject Classification: Primary   31A05,
31A20, 31A25, 31B25, 35Q15; Se\-con\-da\-ry 30E25, 31C05, 34M50,
35F45}

\large

\section{Introduction}

Recall that {\bf $n-$dimensional topological manifold
$\mathbb{M}^{n}$} is a Hausdorff to\-po\-lo\-gi\-cal space with a
countable base every point of which has an open neighborhood that is
homeomorphic to $\mathbb{R}^{n}$ or, the same, to an open ball in
$\mathbb{R}^{n}$, see e.g. \cite{Fo}. A {\bf chart on the manifold
$\mathbb{M}^{n}$} is a pair $(U,\ g)$ where $U$ is an open subset of
the space $\mathbb{M}^{n}$ and $g$ is a homeomorphism of $U$ on an
open subset of the coordinate space $\mathbb{R}^n$. Note that
$\mathbb{R}^2$ is homeomorphic to $\mathbb{C}$ through the
correspondence $(x,y)\Rightarrow z:\ =x+iy$.

A {\bf complex chart} on the two-dimensional manifold $\mathbb{S}$
is a homeomorphism $g$ of an open set $U\subseteq\ \mathbb{S}$ onto
an open set $V\subseteq\ \mathbb{C}$ under that every point $p \in
U$ corresponds a number $z$, its {\bf local coordinate}. The set $U$
itself is sometimes called a chart. Two complex charts $g_1:U_1\to
V_1$ and $g_2:U_2\to V_2$ are called {\bf conformal confirmed} if
the map
\begin{equation} \label{eq1}
g_2\circ g_1^{-1}:\ \ g_1(U_1\cap U_2)\ \to\ g_2(U_1\cap U_2)
\end{equation} is conformal. A {\bf complex atlas} on $\mathbb{S}$
is a collection of mutually conformal confirmed charts covering
$\mathbb{S}$. Complex atlases on $\mathbb{S}$ are called conformal
confirmed if their charts are so.

A {\bf complex structure} on a two-dimensional manifold $\mathbb{S}$
is an equivalence class of conformal confirmed atlases on
$\mathbb{S}$. It is clear that a complex structure on $\mathbb{S}$
can be determined by one of its atlases. Moreover, uniting all
atlases of a complex structure on $\mathbb{S}$, we obtain its atlas
$\Sigma$ that is maximal by inclusion. Thus, the complex structure
can be identified with its maximal atlas $\Sigma$. The {\bf
conjugate complex structure} $\overline{\Sigma}$ on $\mathbb{S}$
consists of the charts $\bar{g}$ of the complex conjugation of $g\in
\Sigma$ that connected each to other by the anti-conformal mapping
of $\mathbb{C}$ of the mirror reflection with respect to the real
axis not keeping orientation. Thus, we have no uniqueness for the
complex structures on two-dimensional manifolds.

A {\bf Riemann surface} is a pair $(\mathbb{S},\Sigma) $ consisting
of a two-dimensional manifold $\mathbb{S}$ and a complex structure
$\Sigma$ on $\mathbb{S}$. As usual, it is written only $\mathbb{S}$
instead of $(\mathbb{S},\Sigma) $ if the choice of the complex
structure $\Sigma$ is clear by a context. Given a Riemann surface
$\mathbb{S}$, a {\bf chart} on $\mathbb{S}$ is a complex chart in
the maximal atlas of its complex structure.

Now, let $\mathbb{S}$ and $\mathbb{S}^*$ be Riemann surfaces. We say
that a mapping $f:\mathbb{S}\to\mathbb{S}^*$ belongs to the Sobolev
class $W^{1,1}_{\rm loc}$ if $f$ belongs to $W^{1,1}_{\rm loc}$ in
local coordinates, i.e., if for every point $p\in\mathbb{S}$ there
exist charts $g: U\to V$ and $g_*: U_*\to V_*$ on $\mathbb{S}$ and
$\mathbb{S}^*$, correspondingly, such that $p\in U$, $f(U)\subseteq
U_*$ and the mapping
\begin{equation} \label{eq2}
F\ :\,=\ g_*\circ f\circ g^{-1}:\ \ V\to V_*
\end{equation}
belongs to the class $W^{1,1}_{\rm loc}$. Note that the latter
property is invariant under re\-pla\-ce\-ments of charts because the
class $W^{1,1}_{\rm loc}$ is invariant with respect to replacements
of variables in $\mathbb{C}$ that are local quasiisometries, see
e.g. Theorem 1.1.7 in \cite{Maz_1985}, and conformal mappings are so
in view of boundedness of their derivatives on compact sets. Note
also that domains $D$ and $D^*$, i.e. open connected sets, on
Riemann surfaces $\mathbb{S}$ and $\mathbb{S}^*$ are themselves
Riemann surfaces with complex structures induced by the complex
structures on $\mathbb{S}$ and $\mathbb{S}^*$,
cor\-res\-pon\-ding\-ly. Hence the definition given above can be
extended to mappings $f:D\to D_*$.

Recall also that functions of the class $W^{1,1}_{\rm loc}$ in
$\mathbb{C}$ are absolutely continuous on lines, see e.g. Theorem
1.1.3 in \cite{Maz_1985}, and, consequently, almost everywhere have
partial derivatives. By the Gehring-Lehto theorem such
complex-valued functions also have almost everywhere the total
differential if they are open mappings, i.e., if they map open sets
onto open sets, see \cite{GL}. Note that this result was before it
obtained by Menshov for homeomorphisms and, moreover, his proof can
be extended to open mappings with no changes, see \cite{Me}. We will
apply this fact just to homeomorphisms. It is clear that the
property of differentiability of mappings at a point is invariant
with respect to replacements of local coordinates on Riemann
surfaces. Note that, under the research of the boundary behavior of
homeomorphisms $f$ between domains on Riemann surfaces, it is
sufficient to be restricted by sense preserving homeomorphisms
because in the case of need we may pass to the conjugate complex
structure in the image.

\bigskip

\section{Definitions and preliminary remarks}

First of all note that by the Uryson theorem topological manifolds
are metrizable because they are Hausdorff regular topological spaces
with a countable base, see \cite{Ur} or Theorem 22.II.1 in
\cite{Ku$_1$}.

\medskip

As well-known, see e.g. Section III.III.2 in \cite{Stoilov}, the
Riemann surfaces are orientable two-dimensional manifolds and,
inversely, orientable two-dimensional manifolds admit complex
structures, i.e., are supports of Riemann surfaces, see e.g. Section
III.III.3 in \cite{Stoilov}, see also Theorem 6.1.9 in \cite{ZVC}.
Moreover, two-dimensional topological manifolds are triangulable,
see e.g. Section III.II.4 in \cite{Stoilov}, see also Theorem 6.1.8
in \cite{ZVC}.

\medskip

Every orientable two-dimensional manifold $\mathbb{S}$ has the {\bf
canonical re\-pre\-sen\-ta\-tion of Kerekjarto-Stoilow} in the form
of a part of the extended complex plane
$\overline{\mathbb{C}}=\mathbb{C}\cup\{\infty\}$ that appears after
removing from ${\mathbb{C}}$ a compact totally disconnected set $B$
of points of the real axis and of a finite or countable collection
of pairs mutually disjoint disks that are symmetric with respect to
the real axis whose boundary circles can be accumulated only to the
set $B$ and whose points pairwise identified, see e.g. III.III in
\cite{Stoilov}. The number $g$ of these pairs of glued circles is
called a {\bf genus of the surface} $\mathbb{S}$.

\medskip

It is clear that the topological model of Kerekjarto-Stoilow is
homeomorphic to the sphere $\mathbb{S}^2\simeq\overline{\mathbb{C}}$
in $\mathbb{R}^3$ with $g$ handles and a compact totally
disconnected set of punctures in $\mathbb{S}^2$. Gluing these
punctures in the Kerekjarto-Stoilow model by points of the set $B$,
we obtain a compact topological space that is not a two-dimensional
manifold if $g=\infty$. Similarly, joining the boundary elements to
the initial surface $\mathbb{S}$, that correspond in a one-to-one
manner to the points of the set $B$, we obtain its {\bf
compactification by Kerekjarto-Stoilow} $\overline{\mathbb{S}}$.

\medskip

Next, let $x_k$, $k=1,2,\ldots$, be a sequence of points in a
topological space $X$. It is said that a point $x_*\in X$ is a {\bf
limit point} of the sequence $x_k$, written
$x_*=\lim\limits_{k\to\infty}\, x_k$ or simply $x_k\to x_*$ if every
neighborhood $U$ of the point $x_*$ contains all points of the
sequence except its finite collection.  Let $\Omega$ and $\Omega_*$
be open sets in topological spaces $X$ and $X_*$, correspondingly.
Later on, $C(x,f)$ denotes the {\bf cluster set} of a mapping
$f:\Omega\to \Omega_*$ at a point $x\in\overline \Omega$, i.e.,
\begin{equation}\label{eq8.5} C(x,f)\ :\ =\ \left\{x_*\in X_*:\ x_*\ =\ \lim\limits_{k\to\infty}f(x_k),\ x_k\to x,\ x_k\in
\Omega\ \right\}\end{equation} It is known that the inclusion
$C(x,f)\subseteq\partial \Omega_*$, $x\in\partial\Omega$, holds for
homeomorphisms $f:\Omega\to \Omega_*$ in metric spaces, see e.g.
Proposition 2.5 in \cite{RSa} or Proposition 13.5 in \cite{MRSY}.
Hence we have the following conclusion.

\medskip

\begin{proposition}\label{pro1}
{\it Let $\Omega$ and $\Omega_*$ be open sets on manifolds
$\mathbb{M}^{n}$ and $\mathbb{M}^{n}_*$, cor\-res\-pon\-ding\-ly,
and let $f:\Omega\to \Omega_*$ be a homeomorphism. Then
\begin{equation}\label{eq3}
C(p,f)\ \subseteq\ \partial \Omega_*\ \ \ \ \ \ \ \ \ \ \ \ \forall\
p\in\partial\Omega
\end{equation}}
\end{proposition}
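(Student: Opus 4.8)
The plan is to reduce the statement to the already-known metric version of the cluster-set inclusion. The only structural ingredient needed is that the ambient manifolds are metrizable: as recalled at the beginning of Section~2, by the Uryson metrization theorem every topological manifold, being a Hausdorff regular space with a countable base, carries a metric compatible with its topology. Thus I would first fix such metrics on $\mathbb{M}^{n}$ and $\mathbb{M}^{n}_*$, turning $\Omega$ and $\Omega_*$ into open subsets of metric spaces, and then invoke Proposition~2.5 in \cite{RSa} (equivalently Proposition~13.5 in \cite{MRSY}) verbatim to obtain $C(p,f)\subseteq\partial\Omega_*$ for all $p\in\partial\Omega$.

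If instead one prefers a self-contained argument, I would proceed directly, using only that manifolds are Hausdorff and that $\Omega_*$ is open. Fix $p\in\partial\Omega$ and $x_*\in C(p,f)$, so that there is a sequence $x_k\in\Omega$ with $x_k\to p$ and $f(x_k)\to x_*$. Since each $f(x_k)\in\Omega_*$, the limit satisfies $x_*\in\overline{\Omega_*}$; and because $\Omega_*$ is open we have $\partial\Omega_*=\overline{\Omega_*}\setminus\Omega_*$, so it suffices to show $x_*\notin\Omega_*$.

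Suppose, to the contrary, that $x_*\in\Omega_*$. Then $f^{-1}$ is defined and continuous at $x_*$, whence $x_k=f^{-1}(f(x_k))\to f^{-1}(x_*)=:y_*\in\Omega$. But $x_k\to p$ as well, and in a Hausdorff space the limit of a sequence is unique, forcing $p=y_*\in\Omega$. This contradicts $p\in\partial\Omega$, since $\Omega$ is open and therefore disjoint from its boundary. Hence $x_*\in\partial\Omega_*$, which closes the argument.

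The proposition is light, so there is no genuine obstacle; the only points requiring care are the hypotheses that legitimize the limit argument, namely that the target $\Omega_*$ is open (used to identify $\partial\Omega_*$ with $\overline{\Omega_*}\setminus\Omega_*$) and that the spaces are Hausdorff (used for uniqueness of limits). Both hold automatically here, the former by assumption and the latter from the very definition of a manifold; in particular the metrization step of the first approach is dispensable and is invoked only in order to cite the existing metric-space results rather than reproduce the short argument.
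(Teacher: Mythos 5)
Your first route is precisely the paper's own proof: the paper observes (via the Uryson theorem recalled at the start of Section~2) that manifolds are metrizable and then cites Proposition~2.5 in \cite{RSa} (Proposition~13.5 in \cite{MRSY}) for the metric-space version of the cluster-set inclusion; nothing more is said there. Your alternative self-contained argument is also correct and is genuinely more elementary: it uses only that manifolds are Hausdorff (uniqueness of sequential limits), that $\Omega_*$ is open (so $\partial\Omega_*=\overline{\Omega_*}\setminus\Omega_*$), and sequential continuity of $f^{-1}$ at an assumed interior limit point, yielding the contradiction $p\in\Omega$. This direct argument shows that the metrization step, which the paper leans on, is actually dispensable for this particular statement -- a small but real gain in generality and transparency, since the sequential definition of $C(p,f)$ in the paper makes the sequence-based reasoning legitimate without any metric.
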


In particular, we come from here to the following statement.


\begin{corollary}\label{cor1}
Let $D$ and $D_*$ be domains on Riemann surfaces $\mathbb{S}$ and
$\mathbb{S}_*$, correspondingly, and let $f:D\to D_*$ be a
homeomorphism. Then
\begin{equation}\label{eq3}
C(\partial D,f):\ =\bigcup\limits_{p\in \partial D} C(p,f)\
\subseteq\
\partial D_*
\end{equation}
\end{corollary}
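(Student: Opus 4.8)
The plan is to reduce the statement directly to Proposition \ref{pro1}. The essential observation is that a Riemann surface is, by definition, a two-dimensional topological manifold equipped with a complex structure; in particular both $\mathbb{S}$ and $\mathbb{S}_*$ are $2$-dimensional manifolds in the sense of Proposition \ref{pro1} (taking $n=2$). Moreover, a domain on a Riemann surface is by definition an open connected subset, so $D$ and $D_*$ are open sets on the manifolds $\mathbb{S}$ and $\mathbb{S}_*$, and the boundaries $\partial D$ and $\partial D_*$ appearing in the assertion are understood with respect to the manifold topologies.

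First I would verify the hypotheses of Proposition \ref{pro1}: setting $\Omega=D$, $\Omega_*=D_*$, $\mathbb{M}^{n}=\mathbb{S}$ and $\mathbb{M}^{n}_*=\mathbb{S}_*$, the map $f:D\to D_*$ is precisely a homeomorphism between open sets on two manifolds, as required. Then Proposition \ref{pro1} yields the pointwise inclusion
\begin{equation*}
C(p,f)\ \subseteq\ \partial D_*\qquad\forall\ p\in\partial D.
\end{equation*}

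Finally I would pass to the union over $p\in\partial D$: since each cluster set $C(p,f)$ is contained in $\partial D_*$, so is their union
\begin{equation*}
C(\partial D,f)\ =\ \bigcup_{p\in\partial D}C(p,f)\ \subseteq\ \partial D_*,
\end{equation*}
which is exactly the asserted inclusion \eqref{eq3}.

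I expect no serious obstacle here, since the entire content of the corollary is already carried by Proposition \ref{pro1}; the only point requiring comment is that the manifold framework of that proposition legitimately applies to domains on Riemann surfaces, and this is immediate from the definitions recalled in the Introduction, namely that Riemann surfaces are orientable two-dimensional manifolds and that domains inherit the manifold structure as open connected subsets.
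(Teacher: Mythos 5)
Your proposal is correct and coincides with the paper's own reasoning: the corollary is stated there as an immediate consequence of Proposition \ref{pro1}, obtained exactly as you do by applying it with $n=2$ to the domains $D$, $D_*$ viewed as open sets on the two-dimensional manifolds underlying $\mathbb{S}$, $\mathbb{S}_*$ and then taking the union of the pointwise inclusions $C(p,f)\subseteq\partial D_*$ over $p\in\partial D$.
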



Now, let us give the main result of the theory of uniformization of
Riemann surfaces that will be essentially applied later on, see e.g.  Section II.3 in
\cite{KAG}. The {\bf Poincare uniformization theorem} (1908)
states that every Riemann surface $\mathbb{S}$ is represented (up to
the conformal equivalence) in the form of the factor
$\widetilde{\mathbb{S}} \, / \, G$ where $\widetilde{\mathbb{S}}$ is
one of the canonical domains: $\overline{\mathbb{C}}$, $\mathbb{C}$
or the unit disk $\mathbb{D}$ in $\mathbb{C}$ and $G$ is a discrete
group of conformal ($=$ fractional) mappings of
$\widetilde{\mathbb{S}}$ onto itself. The corresponding Riemann
surfaces are called of {\bf elliptic, parabolic and hyperbolic
type}.


Moreover, $\widetilde{\mathbb{S}}=\overline{\mathbb{C}}$ only in the
case when $\mathbb{S}$ is itself conformally equivalent to the
sphere $\overline{\mathbb{C}}$ and the group $G$ is trivial, i.e.,
consists only of the identity mapping;
$\widetilde{\mathbb{S}}={\mathbb{C}}$ if $\mathbb{S}$ is conformally
equivalent to either ${\mathbb{C}}$, ${\mathbb{C}} \setminus \{ 0\}$
or a torus and, correspondingly, the group $G$ is either trivial or
is a group of shifts with one generator $z\to z+\omega$, $\omega\in\
{\mathbb{C}} \setminus \{ 0\}$ or a group of shifts with two
generators $z\to z+\omega_1$ and $z\to z+\omega_2$ where $\omega_1$
and $\omega_2\in\ {\mathbb{C}} \setminus \{ 0\}$ and $\ {\rm Im}\
\omega_1/\omega_2\ >\ 0$. Except these simplest cases, every Riemann
surface $\mathbb{S}$ is conformally equivalent to the unit disk
$\mathbb{D}$ factored by a discrete group $G$ without fixed points,
see e.g. Theorem 7.4.2 in \cite{ZVC}. And inversely, every factor
$\mathbb{D}/G$ is a Riemann surface, see e.g. Theorem 6.2.1
\cite{Be}.


In this connection, recall that we identify in the factor
$\widetilde{\mathbb{S}} \, / \, G$ all elements of the {\bf orbit}
$\ G_{z_0}:\ =\{\ z\in\ \widetilde{\mathbb{S}}:\ z=g(z_0),\ g\in G\
\}$ of every point $z_0\in\ \widetilde{\mathbb{S}}$. Recall also
that a group $G$ of fractional mappings of $\mathbb{D}$ onto itself
is called {\bf discrete} if the unit of $G$  (the identical mapping
$I$) is an isolated element of $G$. As easy to see, the latter
implies that all elements of the group $G$ are isolated each to
other. If the elements of the group $G$ have no fixed points as in
the uniformization theorem, then the latter is equivalent to that
the group $G$ {\bf discontinuously acts} on $\mathbb{D}$, i.e., for
every point $z\in\mathbb{D}$, there is its neighborhood $U$ such
that $g(U)\cap U=\varnothing$ for all $g\in G$, $g\ne I$, see e.g.
Theorem 8.4.1 in \cite{Be}.


Let us also describe in short the {\bf Poincare model} of
non-Euclidean plane, in other words, the so-called
Boyai-Gauss-Lobachevskii geometry or the hyperbolic geometry. Points
of the hyperbolic plane are points of the unit disk $\mathbb{D}$ and
{\bf hyperbolic straight lines} are the arcs in $\mathbb{D}$ of
circles that are perpendicular to the unit circle $\mathbb{S}^1:\ =\
\partial\mathbb{D}$ and the diameters of $\mathbb{D}$. Every two points in
$\mathbb{D}$ determine exactly a single hyperbolic straight line,
see e.g. Proposition 7.2.2 in \cite{ZVC}. The {\bf hyperbolic
distance} in the unit disk $\mathbb{D}$ is given by the formula
\begin{equation}\label{eqhyper}
h(z_1,z_2)\ =\ \log\ \frac{1+t}{1-t}\ ,\ \ \ \ \ \ \ {\mbox{where}}\
\ \ \ \ \ t\ =\ \frac{|z_1-z_2|}{|1-z_1\bar {z}_2|}\ ,
\end{equation}
the {\bf hyperbolic length} of a curve $\gamma$ and the {\bf
hyperbolic area} of a set $S$ in $\mathbb{D}$ are calculated as the
integrals, see e.g. Section 7.1 in \cite{Be}, Proposition 7.2.9 in
\cite{KAG},
\begin{equation}\label{eqlength}
s_h(\gamma)\ =\ \int\limits_{\gamma}\frac{2\, |dz|}{1-|z|^2}\ , \ \
\ \ \ \ \ h(S)\ =\ \int\limits_{S}\frac{4\, dx\, dy}{(1-|z|^2)^2}\
,\ \ \ \ \ \ \ {\mbox{where}}\ \ \ \  z = x + i y\, .
\end{equation}
All conformal ($=$ fractional) mappings of $\mathbb{D}$ onto itself
are  {\bf hyperbolic isometries}, i.e., they keep the hyperbolic
distance, see e.g. Theorem 7.4.1 in \cite{Be}, and hence the
hyperbolic length as well as the hyperbolic area are invariant under
such mappings.


A {\bf hyperbolic half-plane} $H$, i.e., one of two connected
components of the complement of a hyperbolic straight line $L$ in
$\mathbb{D}$, is a {\bf hyperbolically convex set}, i.e., every two
points in $H$ can be connected by a segment of a hyperbolic straight
line in $H$, see e.g. \cite{Be}, p. 128. A {\bf  hyperbolic polygon}
is a domain in $\mathbb{D}$ bounded by a Jordan curve, consisting of
segments of hyperbolic straght lines. If $G$ is a discrete group of
fractional mappings of $\mathbb{D}$ onto itself without fixed
points, then the {\bf Dirichlet polygon} for $G$ with the center
$\zeta\in\mathbb{D}$ is the convex set
\begin{equation}\label{eqHL} D_{\zeta}\ =\ \bigcap\limits_{g\in G,\ g\ne
I}\ H_g(\zeta)
\end{equation}
where
$$
H_g(\zeta) = \{ z\in\mathbb{D}:\ h(z,\zeta) < h(z,g(\zeta))\ \}
$$
is a hyperbolic half-plane containing the point $\zeta$ and bounded
by the hyperbolic straight line $L_g(\zeta) = \{ z\in\mathbb{D}:\
h(z,\zeta) = h(z,\, g(\zeta))\ \}$. $D_{\zeta}$ is also called the
{\bf Poincare polygon}. Dirichlet applied this construction at 1850
for the Euclidean spaces and, later on, Poincare has applied it to
hyperbolic spaces.


The geometric approach to the study of the factors $\mathbb{D} / G$
is based on the notion of its fundamental domains. A {\bf
fundamental set} for the group $G$ is a set $F$ in $\mathbb{D}$
containing precisely one point $z$ in every orbit $G_{z_0}$,
$z_0\in\mathbb{D}$. Thus, $\bigcup\limits_{g\in
G}g(F)=\mathbb{D}$. The existence of a fundamental set is guaranteed
by the choice axiom, see e.g. \cite{Wa}, p. 246. A domain $D\subset\mathbb{D}$
is called a {\bf fundamental domain} for $G$ if there is a fundamental set $F$ for
$G$ such that $D\subset F\subset\overline{D}$ and $h(\partial D)=0$.
If $D$ is a fundamental domain for a discrete group $G$ of fractional mappings $\mathbb{D}$
onto itself without fixed points, then $D$ and its images pave $\mathbb{D}$, i.e.,
\begin{equation}\label{eqfund}
\bigcup\limits_{g\in G}g(\overline{D})=\mathbb{D}\ , \ \ \ \ \ \ \
g(D)\cap D=\varnothing \ \ \ \ \ \ \ \forall\ g\in G,\ g\ne I\ .
\end{equation}
The Poincare polygon is an example of a fundamental domain that there is for every such a group,
see e.g. Theorem 9.4.2 in \cite{Be}.


The {\bf hyperbolic distance on a factor} $\mathbb{D}/G$ for a
discrete group $G$ without fixed points can be defined in the
following way. Let $p_1$ and $p_2\in\mathbb{D}/G$. Then by the
definition $p_1$ and $p_2$ are orbits $G_{z_1}$ and $G_{z_2}$ of
points $z_1$ and $z_2\in\mathbb{D}$. Set
\begin{equation}\label{eqdist}
h(p_1,\, p_2)\ =\ \inf\limits_{g_1, g_2\in G}\ h(\, g_1(z_1),\,
g_2(z_2)\, )\ .
\end{equation}
In view of discontinuous action of the group  $G$, no orbit have limit points inside of $\mathbb{D}$ and,
by the invariance of hyperbolic metric in $\mathbb{D}$ with respect to the group $G$, we have
\begin{equation}\label{eqequiv}
h\, (p_1\, ,\ p_2\, )\ =\ \min\limits_{g_1, g_2\in G}\ h\, (\,
g_1(z_1)\, ,\ g_2(z_2)\, )\ =
\end{equation}
$$
=\ \min\limits_{g\in G}\ h\, (\, z_1\ ,\ g(z_2)\, )\ =\
\min\limits_{g\in G}\ h\, (\, g(z_1)\, ,\ z_2\, )\ .
$$
It is easy to see from here that $h(p_1,p_2)=h(p_2,p_1)$ and that $h(p_1,p_2)\ne 0$
if $p_1\ne p_2$. It remains to show the triangle inequality. Indeed, let $p_0=G_{z_0}$, $p_1=G_{z_1}$ and $p_2=G_{z_2}$ and
let $h(p_0,p_1)=h(z_0, g_1(z_1))$ and $h(p_0,p_2)=h(z_0, g_2(z_2))$. Then we conclude from (\ref{eqequiv}) that $$h(p_1,p_2) \leq
h(g_1(z_1), g_2(z_2)) \leq h(z_0, g_1(z_1)) + h(z_0, g_2(z_2)) =
h(p_0,p_1) + h(p_0,p_2) .$$

Now, let $\pi :\ \mathbb{D}\to\mathbb{D}/G$ be the natural projection and let $F$
be a fundamental set in $\mathbb{D}$ for the group $G$. Let us consider in $F$ the metric
\begin{equation}\label{metric}
d(z_1, z_2):=h(\pi(z_1),\pi(z_2))\ .
\end{equation}
Note that by the construction $d(z_1, z_2)\leq h(z_1,z_2)$ and,
furthermore, $d(z_1, z_2)=h(z_1,z_2)$ if $z_2$ is close enough to
$z_1$ in the hyperbolic metric in $\mathbb{D}$. Thus, we obtain a
metric space $(F,\, d)$ that is homeomorphic to $\mathbb{D}/G$ where
the length and the area are calculated by the same formulas
(\ref{eqlength}). Note that the elements of the length and the area
in the integrals (\ref{eqlength})
\begin{equation}\label{eqelements}
ds_h(z)\ =\ \frac{2\, |dz|}{1-|z|^2}\ , \ \ \ \ \ \ \ dh(z)\ =\
\frac{4\, dx\, dy}{(1-|z|^2)^2}\ ,\ \ \ \ \ \ \ {\mbox{где}}\ \ \ \
z = x + i y\, ,
\end{equation}
are invariant with respect to fractional mappings of $\mathbb{D}$ onto itself, i.e.,
they are functions of the point $p\in\mathbb{D}/G$ and hence they make possible to calculate the length and the area on the
Riemann surfaces $\mathbb{D}/G$ with no respect to the choice of the fundamental set
$F$ and the corresponding local coordinates.


For visuality, later on we sometimes identify $\mathbb{D}/G$ with a
fundamental set $F$ in $\mathbb{D}$ for the group $G$ containing a
fundamental (Dirichlet-Poincare) domain for $G$. The factor
$\mathbb{D}/G$ has a natural complex structure for which the
projection $\pi :\ \mathbb{D}\to\mathbb{D}/G$ is a holomorphic
(single-valued analytic) function whose restriction to every
fundamental domain is a conformal mapping and, consequently, its
inverse mapping is a complex chart of the Riemann surface
$\mathbb{D}/G$.

It is clear that the distance (\ref{eqdist}), the elements of length
and area (\ref{eqelements}) do not depend on the choice of $G$ in
the Poincare uniformization theorem because they are invariant under
fractional mappings of $\mathbb{D}$ onto itself and we call them
{\bf hyperbolic} on the Riemann surface $\mathbb{S}$.

The case of a torus $\mathbb{S}$ is similar and much more simple,
and hence it is not separately discussed. In this case, we set
$s_h(z)=|dz|$ and $dh=dx\, dy$ but without the given name. The
latter elements of length and area are also invariant under the
corresponding (complex) proportional shifts  in the Poincare
uniformization theorem but up to the corresponding multiplicative
constants.

Given a family $\Gamma$ of paths $\gamma$ in $\mathbb{S}$, a Borel
function $\varrho:\mathbb{S}\to[0,\infty]$ is called {\bf
admissible} for $\Gamma$, abbr. $\varrho\in \mathrm{adm}\,\,\Gamma$,
if
\begin{equation}\label{eq13.2} \int\limits_{\gamma}\varrho(p)\,ds_h(p)\
\geq\ 1 \end{equation} for all $\gamma\in\Gamma$. The {\bf modulus}
of $\Gamma$ is given by the equality
\begin{equation}\label{eq13.5}M(\Gamma)\ =\ \inf\limits_{\varrho\in
\mathrm{adm}\,\Gamma}\int\limits_{\mathbb{S}} \varrho^{2}(p)\ dh(p)\
.\end{equation}

\bigskip

\section{On mappings  with finite distortion, the main lemma.} Recall that a homeomorphism $f$ between domains $D$ and $D^*$ in ${\Bbb R}^n$,
$n\geqslant2$, is called of {\bf finite distortion} if $f\in
W^{1,1}_{\rm loc}$ and
\begin{equation}\label{eqOS1.3} \Vert f'(x)\Vert^n\leqslant
K(x)\cdot J_f(x)\end{equation} with a function
$K$ that is a.e. finite. As usual, here $f'(x)$ denotes the Jacobian matrix of $f$ at $x \in D$ where it is determined,
$J_f(x)=\det f'(x)$ is the Jacobian of $f$ at $x$, and $\Vert
f'(x)\Vert$ is the operator norm of $f'(x)$, i.e.,
\begin{equation} \label{eq4.1.2} \Vert f'(x)\Vert =\max
\{|f'(x)h|:h\in{{\Bbb R}^n},|h|=1\}.\end{equation}

\medskip

First this notion was introduced in the plane for $f\in W^{1,2}_{\rm
loc}$ in the paper \cite{IS}. Later on, this condition was replaced by
$f\in W^{1,1}_{\rm loc}$, however, with the additional request
$J_f\in L^1_{\rm loc}$, see \cite{IM}. Note that the latter request can be omitted for homeomorphisms.
Indeed, for every homeomorphism
$f$ between domains $D$ and $D^*$ in ${\Bbb R}^n$ with first partial derivatives a.e. in $D$,
there is a set $E$ of the Lebesgue measure zero such that $f$ has $(N)-$property of Lusin on
$D\setminus E$ and
\begin{equation}\label{eqOS1.1.1}
\int\limits_{A}J_f(x)\,dm(x)=|f(A)|\end{equation} for every Borel set $A\subset D\setminus E$, see e.g. 3.1.4,
3.1.8 and 3.2.5 in \cite{Fe}.


In the complex plane, $\Vert f'\Vert=|f_z|+|f_{\overline{z}}|$ and
$J_f=|f_z|^2-|f_{\overline{z}}|^2$ where
$$f_{\overline{z}}=(f_x+if_y)/2\ ,\ \ \ f_{z}=(f_x-if_y)/2,\ \ \ z=x+iy\ ,$$
and $f_x$ and $f_y$ are partial derivatives of $f$ in $x$ and $y$, correspondingly.
Thus, in the case of sense-preserving homeomorphisms $f\in W^{1,1}_{\rm loc}$, (\ref{eqOS1.3}) is equivalent to
the condition that $K_f(z)<\infty $ a.e. where
\begin{equation} \label{eq4.1.4} K_f(z)=
\frac{|f_z|+|f_{\overline{z}}|}{|f_z|-|f_{\overline{z}}|}
\end{equation} if $|f_z|\neq |f_{\overline{z}}|$, $1$ if $f_z=0=f_{\overline{z}}$, and $\infty$ in the rest cases.
As usual, the quantity $K_{f}(z)$ is called {\bf dilatation} of the mapping $f$ at $z$.


If $f:D\to D^*$ is a homeomorphism of the class $W^{1,1}_{\rm loc}$ between
domains $D$ and $D^*$ on the Riemann surfaces $\mathbb{S}$ and
$\mathbb{S}^*$, then $K_{f}(z)$ denotes the dilatation of the mapping $f$ in
local coordinates, i.e., the dilatation of the mapping $F$ in
(\ref{eq2}). The geometric sense of the quantity (\ref{eq4.1.4}) at a point $z$
of differentiability of the mapping $f$ is the ratio of half-axes of the infinitesimal ellipse
into which the infinitesimal circle centered at the point is transferred under the mapping $f$.
The given quantity is invariant under the replacement of local coordinates, because
conformal mappings transfer infinitesimal circles into infinitesimal circles and
infinitesimal ellipses into infinitesimal ellipses with the same ratio of half-axes, i.e.,
$K_{f}$ is really a function of a point $p\in\mathbb{S}$ but not of local coordinates.


We will call a homeomorphism $f:D\to D^*$ between domains $D$ and
$D^*$ on Riemann surfaces $\mathbb{S}$ and $\mathbb{S}^*$ by a {\bf
mapping with finite distortion} if $f$ is so in local coordinates.
It is clear that this property enough to verify only for one atlas
because conformal mappings have $(N)-$property of Lusin. We will say
also that a homeomorphism $f:D\to D^*$ between domains $D$ and $D^*$
in the compactifications of Kerekjarto-Stoilow
$\overline{\mathbb{S}}$ and $\overline{\mathbb{S}^*}$ is a mapping
with finite distortion if this property holds for its restriction to
$\mathbb{S}$. Note that a homeomorphism between domains in
$\mathbb{S}$ and $\mathbb{S}^*$ is always extended to a
homeomorphisms between the corresponding domains in
$\overline{\mathbb{S}}$ and $\overline{\mathbb{S}^*}$. Later on, we
assume that $K_{f}$ is extended by zero outside of $D$ and write $\
K_{f}\in L^{1}_{\rm loc}$ if $K_{f}$ is locally integrable with
respect to the area $h$ on $\mathbb{S}$.

\medskip

\begin{lemma}\label{lem1} Let $D$ and $D^*$ be domains on Riemann surfaces
$\mathbb{S}$ and $\mathbb{S}^*$. If $f:D\to D^*$ is a homeomorphism
of finite distortion  with $\ K_{f}\in L^{1}_{\rm loc}$, then
\begin{equation}\label{eqOS1.8a}M\left(\Delta\left(fC_1,fC_2;fA\right)\right)\
\leqslant\ \int\limits_{A}K_f(p)\cdot\ \xi^2(h(p,p_0))\ dh(p)\ \ \ \
\ \ \ \ \ \forall\ p_0\in\ \overline D\end{equation} for every ring
$A=A(p_0,R_1,R_2)=\{ p\in\mathbb{S}: R_1<h(p,p_0)<R_2 \}$, the
circles $C_1=\{ p\in\mathbb{S}:\, h(p,p_0)=r_1 \}$, $\ C_2=\{
p\in\mathbb{S}:\, h(p,p_0)=r_2 \}$,
$0<R_1<R_2<\varepsilon=\varepsilon(p_0)$, and every measurable
function $\xi:(R_1,R_2)\to[0,\infty]$ such that
\begin{equation}\label{eqOS1.9}\int\limits_{R_1}^{R_2}\xi(R)\ dR\geqslant\ 1\ .\end{equation}
\end{lemma}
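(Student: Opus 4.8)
The plan is to reduce the inequality to the classical $K_O$-modular inequality for mappings of finite distortion on planar domains, transported to the Riemann surface via local coordinates and the uniformization structure. The target estimate \eqref{eqOS1.8a} is exactly the statement that $f$ distorts the modulus of the ring-connecting family in a way controlled by the integral of $K_f$ weighted by $\xi^2$; this is the surface analogue of the well-known inequality $M(\Delta(fC_1,fC_2;fA))\le\int_A K_f\,\xi^2\,dm$ for finite-distortion homeomorphisms in $\mathbb{C}$. The whole point of the extensive preliminary section is that the hyperbolic elements of length $ds_h$ and area $dh$ in \eqref{eqelements} are invariant under the fractional self-maps of $\mathbb{D}$, hence descend to genuinely defined quantities on $\mathbb{D}/G$; so the surface carries a conformally natural metric in which balls $\{h(p,p_0)<r\}$, spheres $C_i$, and the ring $A$ make sense, and the modulus $M(\Gamma)$ of \eqref{eq13.5} is computed with respect to precisely these elements.

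First I would fix $p_0\in\overline D$ and, via the uniformization $\mathbb{S}=\mathbb{D}/G$ (or $\mathbb{C}/G$ in the torus case), lift everything to the universal cover: choose a fundamental set $F$ containing a Dirichlet--Poincaré domain, identify a neighbourhood of $p_0$ with a hyperbolic disk in $\mathbb{D}$ through the local coordinate given by $\pi^{-1}$, and note that for $R_2<\varepsilon(p_0)$ small the ring $A$, the circles $C_1,C_2$, and their $f$-images all sit inside single charts where $F$ of \eqref{eq2} is a planar finite-distortion homeomorphism. The key reduction is then: because $\pi$ restricted to a fundamental domain is conformal and the dilatation $K_f$ is coordinate-invariant (as argued in the text before Lemma~\ref{lem1}), the surface modulus $M(\Delta(fC_1,fC_2;fA))$ equals the ordinary planar modulus of the image of the lifted ring-connecting family, and the weighted integral $\int_A K_f\,\xi^2\,dh$ equals the corresponding planar integral $\int K_f\,\xi^2\,dm$ computed in the chart, since $dh$ pulls back to Lebesgue-type area under the conformal chart up to the density that is already absorbed into the invariant $dh$. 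After this transfer the inequality is the planar $K_O$-estimate.

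To prove the planar estimate itself I would construct an admissible function for the image family by pushing forward $\xi(h(\cdot,p_0))$ along $f$. Let $\Gamma=\Delta(C_1,C_2;A)$ be the family of paths connecting $C_1$ to $C_2$ inside $A$; the function $\varrho_0(p)=\xi(h(p,p_0))$ is admissible for $\Gamma$ precisely because along any such path the $h$-coordinate runs over $(R_1,R_2)$ and \eqref{eqOS1.9} gives $\int_\gamma \varrho_0\,ds_h\ge\int_{R_1}^{R_2}\xi\,dR\ge1$. Defining $\varrho_*$ on $fA$ by the rule $\varrho_*(f(p))\,|df(p)|=\varrho_0(p)\,ds_h(p)$ (i.e. pulling the metric through $f$ where $f$ is differentiable), one checks using absolute continuity on lines and the Gehring--Lehto differentiability that $\varrho_*\in\mathrm{adm}\,f\Gamma$; then $M(f\Gamma)\le\int_{fA}\varrho_*^2\,dh$, and a change of variables using the Lusin $(N)$-property \eqref{eqOS1.1.1} together with the pointwise bound $\|f'\|^2\le K_f\cdot J_f$ (the finite-distortion condition \eqref{eqOS1.3} in dimension $2$) yields $\int_{fA}\varrho_*^2\,dh\le\int_A K_f\,\varrho_0^2\,dh=\int_A K_f\,\xi^2(h(p,p_0))\,dh$, which is \eqref{eqOS1.8a}.

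The main obstacle, and the step deserving the most care, is the rigorous change-of-variables in the last display: one must control the exceptional set where $f$ fails to be differentiable or where $J_f=0$, ensure $\varrho_*$ is genuinely admissible after neglecting this set (the standard worry being paths lying on the bad set, handled by the fact that $f^{-1}$ is also a homeomorphism so that the family of paths avoiding good behaviour has zero modulus), and verify that the $(N)$-property from \eqref{eqOS1.1.1} legitimises the substitution $\int_{fA}\varrho_*^2\,dh(f(p)) = \int_A \varrho_*^2(f(p))\,J_f(p)\,dh(p)$. The dimension-two arithmetic $\|f'\|^2 = (|f_z|+|f_{\bar z}|)^2$ and $J_f=|f_z|^2-|f_{\bar z}|^2$ then converts the ratio $\|f'\|^2/J_f$ into exactly $K_f$, closing the estimate cleanly.
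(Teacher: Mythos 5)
Your overall strategy coincides with the paper's in its first half: uniformize, normalize the center, transfer the ring, the circles and the test function to the plane, and then invoke a planar ring inequality for finite-distortion homeomorphisms. However, both halves of your write-up contain a genuine gap.

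First, the reduction. You claim that for $R_2<\varepsilon(p_0)$ small the ring $A$, the circles $C_1,C_2$ \emph{and their $f$-images} all sit inside single charts. Smallness of $A$ gives no control whatsoever over $f(A\cap D)$: $f$ is merely a homeomorphism, so the image of an arbitrarily small ring can be a domain of $\mathbb{S}^*$ that is not contained in any coordinate neighbourhood. Moreover, since $p_0\in\overline{D}$ may be a boundary point, the set $A\cap D$ on which $f$ is actually defined (recall $K_f\equiv 0$ off $D$) can consist of countably many components, a structure your argument ignores. The paper handles exactly these two points: it decomposes $D(\delta)$ into at most countably many components, observes that each component of the image is homeomorphic to a plane domain and therefore, by the general principle of Koebe, conformally equivalent to a plane domain --- so each image component is itself the domain of a chart --- and then the family $\Delta(fC_1,fC_2;fA)$ splits into countably many subfamilies lying in mutually disjoint charts, whose modulus estimates are summed. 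Your conclusion is recoverable by this argument, but the reason you give (smallness of the ring) is not a proof of it.

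Second, and more seriously, the planar inequality itself. The paper does not reprove it; it cites Theorem 3 of \cite{KPRS}, which is precisely the statement that a plane $W^{1,1}_{\rm loc}$ homeomorphism of finite distortion with $K_f\in L^{1}_{\rm loc}$ is a ring $Q$-homeomorphism with $Q=K_f$. You instead sketch a direct proof, and it founders on the step you yourself flag: admissibility of the pushed-forward density $\varrho_*$ for the image family. Your justification --- ``$f^{-1}$ is also a homeomorphism so that the family of paths avoiding good behaviour has zero modulus'' --- is circular: a bare homeomorphism can distort modulus arbitrarily (bounding exactly this distortion is what Lemma~\ref{lem1} asserts), so you cannot conclude that curves meeting the exceptional set (non-differentiability, $J_f=0$, failure of the $(N)$-property) form a zero-modulus family \emph{in the image}. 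The known rigorous proofs require genuinely more, e.g.\ first establishing Sobolev regularity of the inverse ($f^{-1}\in W^{1,2}_{\rm loc}$ holds for such $f$, a nontrivial theorem), then applying Fuglede's theorem \cite{Fu} on the image side to obtain absolute continuity of $f^{-1}$ along all curves outside a family of modulus zero, and only then carrying out the change of variables in which $\Vert f'\Vert^2/J_f=K_f$ closes the estimate. Without this machinery (or without simply citing \cite{KPRS}, as the paper does), your central step is a gap, not a proof.
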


\begin{proof} As it was discussed in Section 2, here we identify the Riemann surface  $\ \mathbb{D}/G$ with
a fundamental set $F$ in $\, \mathbb{D}$ for $G$ with the metric $d$
defined by (\ref{metric}) that contains a fundamental polygon of
Poincare $D_{z_0}$ for $G$ centered at a point $z_0\in\, \mathbb{D}$
whose orbit $G_{z_0}$ is $p_0$. With no loss of generality we may
assume that $z_0=0$. The latter always can be obtained with the help
of the fractional mapping of $\mathbb{D}$ onto itself
$g_0(z)=(z-z_0)/(1-z\overline{z_0})$ transfering the point $z_0$
into the origin. Passing to the new group $G_0$ we obtain the
Riemann surface $\ \mathbb{D}/G_0$ that is conformally equivalent to
$\ \mathbb{D}/G$. Set
$$\delta_0\ =\ \min\ \left[\, \inf\limits_{\zeta\in \partial D_{0}}\, d(0,\, \zeta),\
\sup\limits_{z\in D}\, d(0,\, z)\, \right]\ . $$ Let us choice $\delta\in(0,\delta_0)$ so small that, for
$d(0,z)\leqslant\delta$, the equality $d(0,z)=h(0,z)$ holds. Note that correspondingly to (\ref{eqhyper})
$$ R\ :\, =\ h(0,z)\
=\ \log\ \frac{1+r}{1-r}\ ,\ \ \ \ \ \ \ {\mbox{where}}\ \ \ \ \ \ r\
:\, =\ |z|\ ,
$$
and, correspondingly,
$$
dR\ =\ \frac{2dr}{1-r^2} \ ,\ \ \ \ \ \ \ r\ =\ \frac{e^R-1}{e^R+1}\
.
$$
Consequently,
$$
\int\limits_{r_1}^{r_2}\eta(r)\ dr\geqslant\ 1
$$
where
$$
\eta(r)\ =\ \frac{2}{1-r^2}\cdot\ \xi\left( \log\
\frac{1+r}{1-r}\right)
$$
and, moreover,
\begin{equation}\label{eqeq} \int\limits_{A}K_f(z)\cdot\ \xi^2(d(z,z_0))\
dh(z)\ =\ \int\limits_{A}K_f(z)\cdot\ \eta^2(|z|)\ dm(z)
\end{equation}
where the element of the area $dm(z):\, =dx\, dy$ corresponds to the Lebesgue measure in the plane $\mathbb{C}$.
Moreover, note that  $A=\{ z\in\mathbb{D}:
r_1<|z|<r_2 \}$, $C_1=\{ z\in\mathbb{D}: |z|=r_1 \}$ и $\ C_2=\{
z\in\mathbb{D}: |z|=r_2 \}$.

It is clear that the subset of the complex plane $D(\delta):\, =\{
z\in D : |z|<\delta\}$ is decomposed into at most a countable
collection of domains. Then components of the set $f(D(\delta))$ are
homeomorphic to these domains and, consequently, by the general
principle of Koebe, see e.g.  Section II.3 in \cite{KAG}, they are
conformally equivalent to plane domains, i.e., the family of curves
$\Delta(fC_1,fC_2;fA)$ is decomposed into a countable collection of
its subfamilies, belonging to the corresponding mutually disjoint
complex charts of the Riemann surface $\ \mathbb{D}/G^*$. Thus, the
conclusion of our lemma follows from Theorem 3 in \cite{KPRS}.
\end{proof} $\Box$

\begin{remark}\label{rmk1}
{ \rm In other words, the statement of Lemma \ref{lem1} means that
every homeomorphism $f$ of finite distortion between domains on
Riemann surfaces with $\ K_{f}\in L^{1}_{\rm loc}$ is the so-called
ring $Q-$homeomorphism with $Q=K_f$. Note also that Riemann surfaces
are locally the so-called Ahlfors $2-$regular spaces with the
mentioned metric and measure $h$, see e.g. Theorem 7.2.2 in
\cite{Be}. Hence further we may apply results of the paper
\cite{Smol} on the boundary behavior of ring $Q-$homeomorphisms in
metric spaces to homeomorphisms with finite distortion between
domains on Riemann surfaces. It makes possible us, in comparison
with the papers \cite{VR} and \cite{RV}, to formulate new results in
terms of the metric and measure $h$ but not in terms of local
coordinates on Riemann surfaces. Recall that the boundary behavior
of Sobolev's homeomorphisms on smooth Riemannian manifolds for
$n\geq 3$ was investigated in the paper \cite{ARS}.}
\end{remark}

\bigskip

\section{On weakly flat and strongly accessible boundaries}

In this section, we follow paper \cite{RSa}, see also Chapter 13 in
monograph \cite{MRSY}.

\medskip

Later on, given sets $E, F$ and $\Omega$ in a Riemann surface
$\mathbb{S}$, $\Delta(E,F;\Omega)$ denotes the family of all curves
$\gamma:[a,b]\to\mathbb{S}$ that join the sets $E$ and $F$ in
$\Omega$, i.e., $\gamma(a)\in E$, $\gamma(b)\in F$ and $\gamma(t)\in
\Omega$ for $a<t<b$.


It is said that the boundary of a domain $D$ in $\mathbb{S}$ is {\bf
weakly flat at a point} $z_0\in\partial D$ if, for every
neighborhood $U$ of the point $z_0$ and every number $N>0$, there is
a neighborhood $V\subset U$ of the point $z_0$ such that
\begin{equation}\label{eq1.5KR}
M\left(\Delta\left(E,F;D\right)\right)\ \geqslant\ N\end{equation}
for all continua $E$ and $F$ in $D$ intersecting $\partial U$ and
$\partial V$. The boundary of $D$ is called {\bf weakly flat} if it
is weakly flat at every point in $\partial D$. Note that smooth and
Lipshitz boundaries are weakly flat.


It is also said that a point $z_0\in\partial D$ is {\bf strongly
accessible} if, for every neighborhood $U$ of the point $z_0$ there
exist a continuum $E$ in $D$, a neighborhood $V\subset U$ of the
point $z_0$ and a number $\delta>0$ such that
\begin{equation}\label{eq1.6KR}
M\left(\Delta\left(E,F;D\right)\right)\ \geqslant\
\delta\end{equation} for every continuum $F$ in $D$ intersecting
$\partial U$ and $\partial V$. The boundary of $D$ is called {\bf
 strongly accessible} if every point $z_0\in\partial D$ is so.


It is easy to see that if the boundary of a domain $D$ in
$\mathbb{S}$ is weakly flat at a point $z_0\in\partial D$, then the
point $z_0$ is strongly accessible from $D$. Moreover, it was proved
in metric spaces with measures that if a domain $D$ is weakly flat
at a point $z_0\in\partial D$, then $D$ is locally connected at
$z_0$, see e.g. Lemma 3.1 in \cite{RSa} or Lemma 13.1 in
\cite{MRSY}.

\medskip

\begin{proposition}\label{pro2}
{\it If a domain $D$ on a Riemann surface ${\mathbb S}$ is weakly
flat at a point in $\partial D$, then $D$ is locally connected at
the point.}
\end{proposition}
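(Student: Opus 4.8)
The plan is to reduce the assertion to the corresponding fact already available in metric measure spaces, since local connectedness at a boundary point is a purely local and topological property while all the quantities entering the hypothesis are intrinsic to the hyperbolic metric and area $h$. First I would fix the point $z_0\in\partial D$ and an arbitrary neighborhood $U$ of it, and recall that, as explained in Section 2, near $z_0$ the Riemann surface $\mathbb{S}=\mathbb{D}/G$ is identified with a fundamental set $F$ carrying the metric $d$ of \eqref{metric}, which coincides locally with the hyperbolic metric of $\mathbb{D}$; thus a sufficiently small neighborhood $W$ of $z_0$ is isometric to a piece of $(\mathbb{D},h)$.

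On such a neighborhood $W$ the pair $(W,h)$ together with the area measure $h$ is, by Theorem 7.2.2 in \cite{Be} (see also Remark \ref{rmk1}), an Ahlfors $2$-regular metric measure space. The key point to verify is that the modulus defined on $\mathbb{S}$ by \eqref{eq13.2}--\eqref{eq13.5}, when restricted to curve families lying in $W$, agrees with the modulus of the general theory on $(W,h,h)$: both use the same line element $ds_h$ in the admissibility condition and the same area element $dh$ in the functional, so the admissible functions and hence the infima coincide. Consequently the hypothesis that $\partial D$ is weakly flat at $z_0$ in the sense of \eqref{eq1.5KR} is exactly weak flatness in the metric-measure sense.

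Having set up this dictionary, I would invoke the general result proved in metric spaces with measures, namely Lemma 3.1 in \cite{RSa} (equivalently Lemma 13.1 in \cite{MRSY}), which asserts precisely that weak flatness at a boundary point forces local connectedness there. Applied to $(W,h,h)$ and the domain $D\cap W$, it yields a neighborhood $V\subseteq U$ of $z_0$ with $V\cap D$ connected, which is the desired conclusion.

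The main obstacle I anticipate is not the topological contradiction argument underlying the cited lemma --- there one assumes two components of $V\cap D$ cluster at $z_0$, produces continua $E,F$ in them meeting $\partial U$ and $\partial V$, and bounds $M(\Delta(E,F;D))$ by a constant independent of $V$, contradicting \eqref{eq1.5KR} --- but rather the careful confirmation that the local structure genuinely satisfies the standing assumptions of that lemma. Concretely, one must check that the Ahlfors $2$-regularity holds uniformly on a neighborhood of $z_0$ and that the restriction of the surface modulus to $W$ really coincides with the intrinsic modulus of $(W,h,h)$, so that replacing $\mathbb{S}$ by $W$ introduces no boundary effect altering the value of $M(\Delta(E,F;D))$ for the relevant curve families. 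Once this localization is justified, the statement follows verbatim from the metric-space lemma.
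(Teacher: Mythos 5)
Your proposal is correct and takes essentially the same route as the paper: the paper obtains Proposition \ref{pro2} precisely by invoking the metric-space result, Lemma 3.1 in \cite{RSa} (equivalently Lemma 13.1 in \cite{MRSY}), which is exactly your key step. The localization dictionary you spell out (local identification with a fundamental set carrying the hyperbolic metric and area, Ahlfors $2$-regularity, agreement of the moduli) is what the paper leaves implicit via Section 2 and Remark \ref{rmk1}, so your extra care only makes explicit what the paper takes for granted.
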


Recall that a domain $D$ is called {\bf locally connected at a
point} in $\partial D$ if, for every neighborhood $U$ of the point,
there is its neighborhood $V\subseteq U$ such that $V\cap D$ is a
domain.

\bigskip

\section{On extending to the boundary of the inverse mappings} In contrast with the direct mappings,
see the next section, we have the following simple criterion for the
inverse mappings.

\medskip

\begin{theorem}\label{th1}
Let $\mathbb{S}$ and $\ \mathbb{S}^*$ be Riemann surfaces, $D$ and
$D^*$ be domains in $\, \overline{\mathbb{S}}$ and $\,
\overline{\mathbb{S}^*}$, correspondingly, $\partial
D\subset\mathbb{S}$ and $\ \partial D^*\subset\mathbb{S}^*$, $D$ be
locally connected on its boundary and let $\partial D^*$ be weakly
flat. Suppose that $f:D\to D^*$ is a homeomorphism of finite
distortion with $\ K_{f}\in L^{1}_{\rm loc}$. Then the inverse
mapping $g=f^{-1}:D^*\to D$ can be extended by continuity to a
mapping $g:\overline{D^*}\to\overline{D}$.
\end{theorem}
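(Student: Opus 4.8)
The plan is to show that $g=f^{-1}$ admits a single-valued continuous extension by proving that the cluster set $C(\zeta_0,g)$ is a singleton for every $\zeta_0\in\partial D^*$. Since $\overline D$ is a closed subset of the compact space $\overline{\mathbb S}$, this cluster set is a nonempty compact subset of $\partial D$ (Corollary \ref{cor1} applied to the homeomorphism $g:D^*\to D$ gives $C(\zeta_0,g)\subseteq\partial D$), and once it is a single point at every boundary point, the continuity of the resulting extension $g:\overline{D^*}\to\overline D$ is routine. So I would argue by contradiction: suppose $C(\zeta_0,g)$ contains two distinct points $x_1,x_2\in\partial D$, and fix sequences $\zeta_m\to\zeta_0$ and $\zeta_m'\to\zeta_0$ in $D^*$ with $g(\zeta_m)\to x_1$ and $g(\zeta_m')\to x_2$. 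I fix radii $0<R_1<R_2<h(x_1,x_2)$ with $R_2<\varepsilon(x_1)$ and a number $\rho\in(0,h(x_1,x_2)-R_2)$, and write $A=A(x_1,R_1,R_2)$ with its circles $C_1,C_2$ as in Lemma \ref{lem1}.

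Next I would build two continua in $D^*$ that simultaneously (i) cross every pair of hyperbolic shells about $\zeta_0$, so that weak flatness of $\partial D^*$ applies, and (ii) have $g$-images confined to the two sides of the ring $A$, so that Lemma \ref{lem1} applies. Using local connectedness of $D$ at $x_1$, I choose a neighborhood giving a connected set $W_1\subseteq\{p:h(p,x_1)<R_1\}\cap D$ with $x_1\in\overline{W_1}$; then $g(\zeta_m)\in W_1$ for large $m$, so $f(W_1)$ is an open connected subset of $D^*$ whose closure contains $\zeta_0$ (it contains $\zeta_m\to\zeta_0$). Likewise, local connectedness at $x_2$ yields a connected $W_2\subseteq\{p:h(p,x_2)<\rho\}\cap D$ with $x_2\in\overline{W_2}$ and $\zeta_0\in\overline{f(W_2)}$. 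I pick fixed points $w_1\in f(W_1)$, $w_2\in f(W_2)$ and a neighborhood $U$ of $\zeta_0$ with $w_1,w_2\notin\overline U$, and let $V\subset U$ be the neighborhood furnished by weak flatness of $\partial D^*$ at $\zeta_0$ for the threshold $N:=N_0+1$, where $N_0:=\int_A K_f(p)\,\xi^2(h(p,x_1))\,dh(p)<\infty$ for the admissible choice $\xi\equiv1/(R_2-R_1)$ (finite since $K_f\in L^1_{\rm loc}$). For $m$ so large that $\zeta_m,\zeta_m'\in V$, I join $g(\zeta_m)$ to $g(w_1)$ by an arc $P_m\subset W_1$ and $g(\zeta_m')$ to $g(w_2)$ by an arc $P_m'\subset W_2$ (possible since $W_1,W_2$ are connected, hence arcwise connected, open subsets of a surface), and set $E_m:=f(P_m)$, $F_m:=f(P_m')$.

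Finally I would play the two modulus estimates against each other. By construction $E_m$ runs from $\zeta_m\in V$ to $w_1\notin\overline U$ and $F_m$ from $\zeta_m'\in V$ to $w_2\notin\overline U$, so both are continua in $D^*$ meeting $\partial U$ and $\partial V$; weak flatness then gives $M(\Delta(E_m,F_m;D^*))\geq N=N_0+1$. On the other hand $g(E_m)=P_m\subset\{p:h(p,x_1)<R_1\}$ while $g(F_m)=P_m'\subset\{p:h(p,x_1)>R_2\}$ (by the triangle inequality, since $\rho<h(x_1,x_2)-R_2$), so every curve joining $E_m$ and $F_m$ in $D^*$ pulls back under $g$ to a curve joining the two sides of the ring $A$, hence crosses $A$ and carries a subcurve in $\Delta(fC_1,fC_2;fA)$; by minorization of moduli and Lemma \ref{lem1}, $M(\Delta(E_m,F_m;D^*))\leq M(\Delta(fC_1,fC_2;fA))\leq N_0$. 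Since $N_0+1\leq N_0$ is impossible, $C(\zeta_0,g)$ cannot contain two points, and the theorem follows. I expect the main obstacle to be precisely the construction of the middle paragraph: arranging the two arcs so as to play this double role—long enough (reaching $w_1,w_2$ outside $U$) to trigger the lower bound from weak flatness, yet with $g$-images trapped on opposite sides of $A$ to trigger the upper bound from Lemma \ref{lem1}—together with the verification that $f(W_i)$ genuinely accumulates at $\zeta_0$, which is exactly where local connectedness of $D$ on its boundary enters.
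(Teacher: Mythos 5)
Your proof is correct, but it proceeds quite differently from the paper's. The paper's own proof of Theorem \ref{th1} is a three-line reduction: having observed that $\overline{\mathbb{S}}$ is metrizable by Uryson's theorem and compact, hence sequentially compact, so that $\overline{D}$ is compact, it simply invokes Theorem 5 of \cite{Smol} on the boundary behavior of ring $Q$-homeomorphisms in metric spaces, whose hypotheses are supplied by Lemma \ref{lem1} ($f$ is a ring $Q$-homeomorphism with $Q=K_f$) and Remark \ref{rmk1} (with the metric and measure $h$ the surface is locally Ahlfors $2$-regular). What you have written is, in effect, the content of that cited theorem unfolded and proved directly on the surface: the two-cluster-point contradiction, the continua $E_m=f(P_m)$, $F_m=f(P_m')$ built from the connected sets $W_1,W_2$ given by local connectedness of $D$ at $x_1,x_2$, and the clash between the weak-flatness lower bound $N_0+1$ and the upper bound $N_0$ coming from Lemma \ref{lem1} with $\xi\equiv 1/(R_2-R_1)$ via the minorization principle for the modulus (every curve of $\Delta(E_m,F_m;D^*)$ has a subcurve in $\Delta(fC_1,fC_2;fA)$, hence its modulus is not larger, cf. \cite{Fu}); this is precisely the proof scheme of the metric-space results in \cite{RSa}, \cite{Smol} and Chapter 13 of \cite{MRSY}. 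The trade-off is clear: the paper's citation route is short and places the theorem inside a general theory, but conceals the argument; yours is self-contained, needs only Lemma \ref{lem1}, weak flatness, local connectedness and standard modulus facts, and would make the result independent of \cite{Smol}. Your ordering of choices ($N_0$ first, then $U$ avoiding $w_1,w_2$, then $V$ from weak flatness, then $m$ large) is the right one; note only that finiteness of $N_0$ uses, besides $K_f\in L^1_{\rm loc}$, the compactness of the closed ring $\overline{A}$ for $R_2<\varepsilon(x_1)$ --- which is exactly the role of the restriction $R_2<\varepsilon(p_0)$ in Lemma \ref{lem1}.
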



As it was before, we assume here that the dilatation $K_{f}$ is extended by zero outside of the domain $D$.

\medskip

\begin{proof} By the Uryson theorem, see e.g. Theorem 22.II.1 in \cite{Ku$_1$},  $\overline{\mathbb{S}}$
is a metrizable space. Hence the compactness of
$\overline{\mathbb{S}}$  is equivalent to its sequential
compactness, see e.g. Remark 41.I.3 in \cite{Ku$_2$}, and the
closure $\overline{D}$ is a compact subset of ${\mathbb{S}}$, see
e.g. Proposition I.9.3 in \cite{Bou}. Thus, the conclusion of
Theorem \ref{th1} follows by Theorem 5 in \cite{Smol} as well as by
Lemma \ref{lem1} and Remark \ref{rmk1}.
\end{proof} $\Box$

\bigskip

\section{On extending to the boundary of the direct mappings} As it was before,
we assume here that the function $K_f$ is extended by zero outside
of the domain $D$.

In contrast to the case of the inverse mappings, as it was already
established in the plane, no degree of integrability of the
dilatation leads to the extension to the boundary of direct mappings
of the Sobolev class, see e.g. the proof of Proposition 6.3 in
\cite{MRSY}. The corresponding criterion for that given below is
much more refined. Namely, in view of Lemma \ref{lem1} and Remark
\ref{rmk1}, by Lemma 3 in \cite{Smol} we obtain the following.

\medskip

\begin{lemma}\label{lem3}
Let $\mathbb{S}$ and $\, \mathbb{S}^*$ be Riemann surfaces, $D$ and
$\, D^*$ be domains in $\overline{\mathbb{S}}$ and $
\overline{\mathbb{S}^*}$, correspondingly, $\partial
D\subset\mathbb{S}$, $\partial D^*\subset\mathbb{S}^*$, $D$ be
locally connected at a point $p_0\in\partial D$. Suppose that
$f:D\to D^*$  is a homeomorphism of finite distortion with $\
K_{f}\,\in\, L^{1}_{\rm loc}\ $ and $\
\partial D^*$ is strongly accessible at least at one point in the cluster set $\ C(p_0,f)\,$
and
\begin{equation}\label{eqpsi} \int\limits_{\varepsilon<h(p,p_0)<\varepsilon_0}
K_f(p)\cdot\,\psi^{2}_{p_0,\varepsilon}(h(p,p_0))\ dh(p)\ =\
o(I_{p_0,\varepsilon_0}^{2}(\varepsilon))\ \ \ \ \ \ \ \ \mbox{as}\
\ \  \  \varepsilon\ \to\ 0\end{equation} for some $\varepsilon_0>0$
where $\psi_{p_0,\varepsilon}(t)$ is a family of nonnegative
measurable (by Lebesgue) functions on $(0,\infty)$ such that
\begin{equation}\label{eqI} 0\ <\ I_{p_0,\varepsilon_0}(\varepsilon)\colon=\int\limits_{\varepsilon}^{\varepsilon
_0} \psi_{p_0,\varepsilon}(t)\,dt\ <\ \infty\qquad\forall\
\varepsilon\in(0,\varepsilon_0)\ .\end{equation}

Then $\, f$ is extended by continuity to the point $\, p_{0}$ and
$\, f(p_0)\,\in\
\partial D^*$.
\end{lemma}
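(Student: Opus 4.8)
The plan is to reduce everything to the ring $Q$-homeomorphism property already packaged in Lemma \ref{lem1} and Remark \ref{rmk1}, and then to run the standard modulus contradiction for boundary extension. By Lemma \ref{lem1} the homeomorphism $f$ satisfies the inequality (\ref{eqOS1.8a}) for every hyperbolic ring centered at $p_0$, so it is a ring $Q$-homeomorphism with $Q=K_f$; this is precisely the hypothesis under which Lemma 3 of \cite{Smol} applies in locally Ahlfors $2$-regular metric spaces, and the conclusion then follows by that result. Below I sketch the argument behind it directly.

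First I would suppose, towards a contradiction, that $f$ does not extend continuously to $p_0$. Since $D$ is locally connected at $p_0$, I can choose a shrinking sequence of neighborhoods $V_k$ of $p_0$ with $V_k\cap D$ connected, whence the cluster set $C(p_0,f)=\bigcap_k\overline{f(V_k\cap D)}$ is a continuum, and by Corollary \ref{cor1} it lies in $\partial D^*$. Failure of continuity means this continuum is nondegenerate. Let $y_0\in C(p_0,f)$ be the point at which $\partial D^*$ is strongly accessible, and fix a neighborhood $U$ of $y_0$ so small that $C(p_0,f)\not\subseteq\overline U$; strong accessibility then furnishes a continuum $E\subset D^*$, a neighborhood $V\subset U$ of $y_0$, and $\delta>0$ with $M(\Delta(E,F;D^*))\geqslant\delta$ for every continuum $F$ in $D^*$ meeting $\partial U$ and $\partial V$.

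Next I would produce the competing continua $F_k$. Because $y_0\in C(p_0,f)\cap V$ while $C(p_0,f)$ also has points outside $\overline U$, for large $k$ the connected set $f(V_k\cap D)$ reaches both $V$ and the complement of $U$, so it contains a continuum $F_k\subset D^*$ meeting $\partial U$ and $\partial V$; hence $M(\Delta(E,F_k;D^*))\geqslant\delta$. Transferring by $g=f^{-1}$, the set $g(E)$ is a fixed continuum in $D$ whose hyperbolic distance to $p_0$ has a positive minimum $\rho$, while $g(F_k)\subset V_k\cap D$ shrinks to $p_0$. Fixing $\varepsilon_0\in(0,\rho)$ and choosing $R_1=R_1(k)\to0$ with $V_k\cap D\subset\{h(\cdot,p_0)<R_1\}$, every curve joining $g(E)$ to $g(F_k)$ must cross the ring $A=A(p_0,R_1,\varepsilon_0)$ from $C_2$ to $C_1$; thus $\Delta(E,F_k;D^*)$ is minorized by $\Delta(fC_1,fC_2;fA)$ (each of its curves contains, through $f$, a subcurve crossing the ring), whence $M(\Delta(E,F_k;D^*))\leqslant M(\Delta(fC_1,fC_2;fA))$.

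Finally I would apply Lemma \ref{lem1} with the admissible choice $\xi(t)=\psi_{p_0,\varepsilon}(t)/I_{p_0,\varepsilon_0}(\varepsilon)$, $\varepsilon=R_1$, which satisfies (\ref{eqOS1.9}) by (\ref{eqI}). Then (\ref{eqOS1.8a}) together with the decay hypothesis (\ref{eqpsi}) gives
$$M(\Delta(fC_1,fC_2;fA))\ \leqslant\ \frac{1}{I_{p_0,\varepsilon_0}^{2}(\varepsilon)}\int\limits_{\varepsilon<h(p,p_0)<\varepsilon_0}K_f(p)\,\psi_{p_0,\varepsilon}^{2}(h(p,p_0))\,dh(p)\ \longrightarrow\ 0$$
as $k\to\infty$, contradicting $M(\Delta(E,F_k;D^*))\geqslant\delta$. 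Hence $C(p_0,f)$ is a single point of $\partial D^*$, so $f$ extends continuously to $p_0$ with $f(p_0)\in\partial D^*$. The main obstacle is the geometric step of constructing the $F_k$ and verifying the minorization $\Delta(E,F_k;D^*)>\Delta(fC_1,fC_2;fA)$: one must match the roles of $\varepsilon$ and $\varepsilon_0$ to the separating circles and confirm that the homeomorphism carries the ``subcurve crossing the ring'' relation faithfully from the domain to the target. Everything else is bookkeeping once the ring estimate of Lemma \ref{lem1} is in place.
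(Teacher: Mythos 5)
Your proposal is correct and takes essentially the same route as the paper: the paper's entire proof of this lemma is the observation that, by Lemma \ref{lem1} and Remark \ref{rmk1}, $f$ is a ring $Q$-homeomorphism with $Q=K_f$ on a locally Ahlfors $2$-regular space, so Lemma 3 of \cite{Smol} applies---which is exactly your first paragraph. Your remaining paragraphs faithfully unpack the modulus-contradiction argument behind that cited lemma (nondegenerate cluster continuum, strong accessibility giving the lower bound $\delta$, minorization by the ring family, and the normalized test function $\xi=\psi_{p_0,\varepsilon}/I_{p_0,\varepsilon_0}(\varepsilon)$ forcing the modulus to $0$), so they constitute a correct self-contained elaboration rather than a different approach.
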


\medskip

Note that conditions (\ref{eqpsi})-(\ref{eqI}) imply that
$I_{p_0,\varepsilon_0}(\varepsilon)\to\infty$ as $\varepsilon\to 0$
and that $\varepsilon_0$ can be chosen arbitrarily small with
keeping (\ref{eqpsi})-(\ref{eqI}).

\medskip

Lemma \ref{lem3} makes possible to obtain a series of criteria on the continuous
extension to the boundary of mappings with finite distortion between domains
on Riemann surfaces. Here we assume that $K_f\equiv 0$ outside of $D$.

\medskip

\begin{theorem}\label{th2}
{Let $\, \mathbb{S}$ and $\, \mathbb{S}^*$ be Riemann surfaces, $D$
and $D^*$ be domains on $\, \overline{\mathbb{S}}$ and $\,
\overline{\mathbb{S}^*}$, correspondingly, $\partial
D\subset\mathbb{S}$ and $\ \partial D^*\subset\mathbb{S}^*$, $D$ be
locally connected on its boundary and $\partial D^*$ be strongly
accessible. Suppose that $f:D\to D^*$ is a homeomorphism of finite
distortion with $\, K_{f}\in L^{1}_{\rm loc}$ and
\begin{equation}\label{eq8.11.2}\int\limits_{0}^{\delta}
\frac{dr}{||K_{f}||\, (p_0,r)}\ =\ \infty\ \ \ \ \ \ \ \ \forall\
p_0\in\partial D
\end{equation} where
\begin{equation}\label{eq8.11.4}
||K_{f}||\, (p_0,r)\ = \int\limits_{h(p,p_0)=r}K_{f}(p)\ ds_h(p)\
.\end{equation}

Then the mapping $f$ is extended by continuity to $\, \overline{D}$
and $\, f(\partial D)= \partial D^*$.}
\end{theorem}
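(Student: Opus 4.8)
The plan is to derive Theorem \ref{th2} as a consequence of Lemma \ref{lem3} by verifying its hypotheses at each boundary point and by choosing a suitable family of test functions $\psi_{p_0,\varepsilon}$ built from the spherical integral means $||K_f||(p_0,r)$. Since $D$ is locally connected on all of $\partial D$ and $\partial D^*$ is strongly accessible (in particular at every point of $C(p_0,f)\subseteq\partial D^*$, which is nonempty by Corollary \ref{cor1}), the only remaining condition to check for each fixed $p_0\in\partial D$ is the integral growth estimate (\ref{eqpsi})--(\ref{eqI}). The idea is the standard Zorich-type choice $\psi_{p_0,\varepsilon}(t)=1/||K_f||(p_0,t)$, independent of $\varepsilon$, so that (\ref{eqI}) becomes $I_{p_0,\varepsilon_0}(\varepsilon)=\int_{\varepsilon}^{\varepsilon_0} dt/||K_f||(p_0,t)$, and the divergence hypothesis (\ref{eq8.11.2}) guarantees $I_{p_0,\varepsilon_0}(\varepsilon)\to\infty$ as $\varepsilon\to 0$.

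First I would fix $p_0\in\partial D$ and pass to the hyperbolic local coordinates as in the proof of Lemma \ref{lem1}, so that integration over the ring $\varepsilon<h(p,p_0)<\varepsilon_0$ can be carried out in polar-type fashion with the spherical element $ds_h$ on each circle $h(p,p_0)=t$ and the radial element $dt$. The key computation is to evaluate the left-hand side of (\ref{eqpsi}) with this choice of $\psi$. Writing the area integral as an iterated integral over circles of radius $t$ and then over $t\in(\varepsilon,\varepsilon_0)$, and using the definition (\ref{eq8.11.4}) of $||K_f||(p_0,t)$, I obtain
\begin{equation}\label{eqplan1}
\int\limits_{\varepsilon<h(p,p_0)<\varepsilon_0} K_f(p)\cdot\psi^2_{p_0,\varepsilon}(h(p,p_0))\ dh(p)\ =\ \int\limits_{\varepsilon}^{\varepsilon_0}\frac{||K_f||(p_0,t)}{||K_f||^2(p_0,t)}\ dt\ =\ \int\limits_{\varepsilon}^{\varepsilon_0}\frac{dt}{||K_f||(p_0,t)}\ =\ I_{p_0,\varepsilon_0}(\varepsilon).
\end{equation}
Thus the left-hand side of (\ref{eqpsi}) equals $I_{p_0,\varepsilon_0}(\varepsilon)$, which is $o(I_{p_0,\varepsilon_0}^2(\varepsilon))$ precisely because $I_{p_0,\varepsilon_0}(\varepsilon)\to\infty$; this is exactly where hypothesis (\ref{eq8.11.2}) is used. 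It also shows $0<I_{p_0,\varepsilon_0}(\varepsilon)<\infty$ for $\varepsilon\in(0,\varepsilon_0)$, confirming (\ref{eqI}), provided $\varepsilon_0$ is small enough that the fundamental-domain identification and the metric identity $d=h$ near $p_0$ hold. Having verified all hypotheses of Lemma \ref{lem3}, I conclude that $f$ extends continuously to $p_0$ with $f(p_0)\in\partial D^*$; since $p_0\in\partial D$ was arbitrary, $f$ extends continuously to $\overline{D}$ with $f(\partial D)\subseteq\partial D^*$.

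The main obstacle is justifying the Fubini-type passage in (\ref{eqplan1})---that is, that the coarea/polar disintegration of the hyperbolic area measure $dh$ into circles $ds_h$ and radial variable $dt$ is valid here, and that $||K_f||(p_0,t)$ is measurable and finite for a.e.\ $t$ so that $\psi_{p_0,\varepsilon}$ is a legitimate admissible family. This requires the local Ahlfors $2$-regularity and the hyperbolic polar structure recorded in Remark \ref{rmk1} and Section 2; the regularization handling the possibly singular set where $||K_f||(p_0,t)=0$ or $=\infty$ must be done with care, but it is standard once $K_f\in L^1_{\rm loc}$ is invoked. Finally, to upgrade the inclusion $f(\partial D)\subseteq\partial D^*$ to the equality $f(\partial D)=\partial D^*$, I would apply the same argument to the inverse homeomorphism $g=f^{-1}$ (whose continuous boundary extension is furnished by Theorem \ref{th1}, whose hypotheses---$D$ locally connected, $\partial D^*$ weakly flat hence strongly accessible---are compatible) and combine the two surjectivity directions via a topological degree or connectedness argument on the compact boundaries, using Corollary \ref{cor1} to control cluster sets in both directions.
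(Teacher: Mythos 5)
The core of your argument is exactly the paper's proof: the same choice $\psi_{p_0}(t)=1/\|K_f\|(p_0,t)$ (independent of $\varepsilon$), the same Fubini-type evaluation collapsing the left-hand side of (\ref{eqpsi}) to $I_{p_0,\varepsilon_0}(\varepsilon)$, the same observation that the divergence hypothesis (\ref{eq8.11.2}) forces $I_{p_0,\varepsilon_0}(\varepsilon)\to\infty$ and hence $I_{p_0,\varepsilon_0}(\varepsilon)=o(I^2_{p_0,\varepsilon_0}(\varepsilon))$, and the same application of Lemma \ref{lem3} at each $p_0\in\partial D$. One minor point: to secure $0<I_{p_0,\varepsilon_0}(\varepsilon)<\infty$ in (\ref{eqI}), the paper uses \emph{both} $K_f\in L^1_{\rm loc}$ (which makes $\|K_f\|(p_0,t)$ finite for a.e.\ $t$, hence $I>0$) \emph{and} the pointwise bound $K_f\geqslant 1$ on $D$ (which bounds $\|K_f\|(p_0,t)$ from below, hence $I<\infty$); your appeal to $K_f\in L^1_{\rm loc}$ alone covers only the first half.

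The genuine flaw is in your last step, the upgrade from $f(\partial D)\subseteq\partial D^*$ to $f(\partial D)=\partial D^*$. You invoke Theorem \ref{th1} for $g=f^{-1}$, but Theorem \ref{th1} requires $\partial D^*$ to be \emph{weakly flat}, whereas Theorem \ref{th2} assumes only that $\partial D^*$ is \emph{strongly accessible}. The implication runs the wrong way for you: weak flatness implies strong accessibility, not conversely, so under the hypotheses of Theorem \ref{th2} you are not entitled to apply Theorem \ref{th1}, and the continuous extension of the inverse map is not available. Fortunately, no inverse extension and no degree argument are needed. Once $f$ has a continuous extension $\bar f:\overline{D}\to\overline{D^*}$, note that $\overline{D}$ is compact (it is a closed subset of the compact space $\overline{\mathbb{S}}$), so $\bar f(\overline{D})$ is compact, hence closed in $\overline{\mathbb{S}^*}$; since it contains $D^*$, it contains $\overline{D^*}$, i.e.\ $\bar f(\overline{D})=\overline{D^*}$. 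Combining this with the cluster-set inclusion $C(\partial D,f)\subseteq\partial D^*$ of Corollary \ref{cor1}, every point of $\partial D^*$ is the image of some point of $\partial D$, which gives the equality $f(\partial D)=\partial D^*$. This purely topological argument is what the paper cites as Proposition 2.5 in \cite{RSa}, see also Proposition 13.5 in \cite{MRSY}.
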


\medskip

\begin{proof} Indeed, setting $\psi_{p_0}(t)=1/||K_{f}||\,
(p_0,t)$ for all $t\in(0,\varepsilon_0)$ under small enough
$\varepsilon_0>0$ and $\psi_{p_0}(t)=1$ for all
$t\in(\varepsilon_0,\infty)$, we obtain from condition
(\ref{eq8.11.2}) that
$$
\int\limits_{\varepsilon<h(p,p_0)<\varepsilon_0}
K_f(p)\cdot\,\psi^{2}_{p_0}(h(p,p_0))\ d\, h(p)\ =\
I_{p_0,\varepsilon_0}(\varepsilon)\ =\
o(I^2_{p_0,\varepsilon_0}(\varepsilon))\ \ \ \ \ \ \mbox{as}\ \ \
\varepsilon\to 0
$$
where, in view of the conditions $\ K_f(p)\geqslant 1\ $ in $\ D\ $
and $\ K_{f}\,\in\, L^{1}_{\rm loc}\ $,
$$
0\ <\
I_{p_0,\varepsilon_0}(\varepsilon):\,=\int\limits_{\varepsilon}^{\varepsilon
_0} \psi_{p_0}(t)\,dt\ <\ \infty\ .
$$
Thus, the first conclusion of Theorem \ref{th2} follows from Lemma
\ref{lem3}. The se\-cond conclusion of Theorem \ref{th2} follows
e.g. from Proposition 2.5 in \cite{RSa}, see also  Proposition 13.5
in \cite{MRSY}.
\end{proof} $\Box$

\medskip

\begin{corollary} {\it In particular, the conclusion of Theorem \ref{th2} holds if
\begin{equation}\label{eqhLOG}
K_f(p)\ \ =O{\left(\log\frac{1}{h(p,p_0)}\right)} \ \ \ \ \ \
\mbox{as}\ \ \ p\to p_0\ \ \ \ \ \ \ \ \forall\ p_0\in\partial D
\end{equation}
or, more generally,
\begin{equation}\label{eqlog}
k_{p_{0}}(\varepsilon)=O{\left(\log\frac{1}{\varepsilon}\right)} \ \
\ \ \ \ \mbox{as}\ \ \ \varepsilon\to 0\ \ \ \ \ \ \ \ \forall\
p_0\in\partial D
\end{equation} where $k_{p_{0}}(\varepsilon)$ is the mean value of the function $K_{f}$ over the circle $h(p,p_0)=\varepsilon$.}
\end{corollary}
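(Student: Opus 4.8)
The plan is to deduce both hypotheses from the single integral divergence condition (\ref{eq8.11.2}) and then quote Theorem \ref{th2} verbatim. First I would note that (\ref{eqhLOG}) is merely a pointwise special case of (\ref{eqlog}): if $K_f(p)\le C\log(1/h(p,p_0))$ for all $p$ sufficiently close to $p_0$, then on the whole circle $\{h(p,p_0)=\varepsilon\}$ one has $K_f(p)\le C\log(1/\varepsilon)$ almost everywhere, whence the mean value obeys $k_{p_0}(\varepsilon)\le C\log(1/\varepsilon)$. So it suffices to carry out the argument under the weaker assumption (\ref{eqlog}).

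Next I would relate the quantity $||K_f||(p_0,r)$ of (\ref{eq8.11.4}) to the mean value $k_{p_0}(r)$. By definition the mean value is the integral in (\ref{eq8.11.4}) divided by the hyperbolic length $\ell(r)$ of the circle $\{h(p,p_0)=r\}$, so that $||K_f||(p_0,r)=\ell(r)\,k_{p_0}(r)$. Since $\partial D\subset\mathbb{S}$, the point $p_0$ is an interior point of the surface, and exactly as in the proof of Lemma \ref{lem1} I would fix $\delta>0$ so small that on the ball $\{h(p,p_0)<\delta\}$ the metric $d$ agrees with the hyperbolic metric $h$ and this ball is isometric to a genuine hyperbolic disk in $\mathbb{D}$. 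For such small $r$ the level set is a true hyperbolic circle of radius $r$, whose hyperbolic circumference equals $2\pi\sinh r$; in particular $\ell(r)\le C'r$ for all sufficiently small $r$.

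Combining these two observations, under (\ref{eqlog}) there exist constants and some $\delta\in(0,1)$ for which
\[
||K_f||(p_0,r)\ =\ \ell(r)\,k_{p_0}(r)\ \le\ C''\,r\,\log\frac1r\qquad\text{for all }r\in(0,\delta),
\]
and therefore
\[
\int\limits_0^{\delta}\frac{dr}{||K_f||(p_0,r)}\ \ge\ \frac1{C''}\int\limits_0^{\delta}\frac{dr}{r\,\log(1/r)}\ =\ +\infty ,
\]
the last integral diverging under the substitution $u=\log(1/r)$, which reduces it to $\int_{\log(1/\delta)}^{\infty}du/u$. Since $p_0\in\partial D$ was arbitrary, condition (\ref{eq8.11.2}) holds everywhere on $\partial D$, so the continuous extension of $f$ to $\overline D$ together with $f(\partial D)=\partial D^*$ follows at once from Theorem \ref{th2}.

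The one step genuinely needing care is the identification, for small $r$, of the level set $\{h(p,p_0)=r\}$ with an honest hyperbolic circle, so that its length is $2\pi\sinh r\sim 2\pi r$ rather than something larger; this is precisely the local isometry with a hyperbolic disk already exploited in Lemma \ref{lem1}, and it is exactly what converts the logarithmic growth of the dilatation into the borderline divergent integral required by Theorem \ref{th2}. Everything else is the elementary chain of estimates above.
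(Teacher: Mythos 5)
Your proof is correct and is precisely the argument the paper intends: the corollary is stated as an immediate consequence of Theorem \ref{th2} (with no separate proof given), and the standard derivation is exactly your chain $||K_f||(p_0,r)=\ell(r)\,k_{p_0}(r)\le C\,r\log(1/r)$ for small $r$, followed by the divergence $\int_0^\delta dr/(r\log(1/r))=\infty$, which yields condition (\ref{eq8.11.2}). Both of your supporting steps — reducing (\ref{eqhLOG}) to (\ref{eqlog}) by averaging the pointwise bound over the circle, and using the local isometry with the hyperbolic disk (as in the proof of Lemma \ref{lem1}) to get the circumference bound $\ell(r)=2\pi\sinh r\le C'r$ — are sound.
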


\medskip

By Theorem 3.1 in \cite{RSY} with $\lambda_2=e/\pi$ we have the
following consequence from Theorem \ref{th2}, see also arguments in
the proof of Lemma \ref{lem1}.

\medskip

\begin{theorem} \label{th3} {\it Under hypotheses of Theorem \ref{th2},
suppose that
\begin{equation}\label{eq1009}\int\limits_{U}\Phi(K_{f}(p))\ dh(p)\ <\
\infty\end{equation}  in a neighborhood $U$ of $\partial D$ where
$\Phi:\overline{\Bbb R}_{+}\to{\overline{\Bbb R}_{+}} $ is a
nondecreasing convex function with the condition
\begin{equation}\label{eq1010}\int\limits_{\delta}^{\infty}\frac{d\tau}{\tau\Phi^{-1}(\tau)}=\infty\ ,\ \ \ \ \ \ \ \ \ \ \delta>\Phi(0)\ .\end{equation}

Then the mapping $f$ is extended by continuity to $\, \overline{D}$
and $\, f(\partial D)= \partial {D^*}$.}
\end{theorem}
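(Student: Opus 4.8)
The plan is to deduce Theorem \ref{th3} from Theorem \ref{th2} by showing that the Orlicz-type integral hypothesis (\ref{eq1009})--(\ref{eq1010}) implies the divergence condition (\ref{eq8.11.2}). Once
$$
\int\limits_0^\delta \frac{dr}{\|K_f\|(p_0,r)}\ =\ \infty \qquad \forall\ p_0\in\partial D
$$
is verified, the continuous extension of $f$ to $\overline{D}$ together with the equality $f(\partial D)=\partial D^*$ is exactly the conclusion of Theorem \ref{th2}, so nothing further is needed. Thus the entire content reduces to the implication (\ref{eq1009})$\Rightarrow$(\ref{eq8.11.2}).

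First I would localize as in the proof of Lemma \ref{lem1}. Fixing $p_0\in\partial D$, I identify a neighborhood of $p_0$ on $\mathbb{D}/G$ with a Euclidean disk centered at the origin by means of the fractional self-map of $\mathbb{D}$ sending the center $z_0$ of the orbit $p_0$ to $0$; writing $\rho=|z|$ for the Euclidean radius and $r=h(0,z)=\log\frac{1+\rho}{1-\rho}$ for the hyperbolic one, one has $\rho=\tanh(r/2)$, so $\rho$ and $r$ are comparable as $r\to 0$. Since the conformal factor $\frac{2}{1-\rho^2}$ is continuous and stays bounded and bounded away from zero on a small disk, the hyperbolic metric is bi-Lipschitz equivalent to the Euclidean one there: hyperbolic circles $h(p,p_0)=r$ correspond to Euclidean circles $|z|=\rho$, the elements $ds_h$ and $dh$ reduce to $|dz|$ and $dm(z)$ up to this bounded factor, and hypothesis (\ref{eq1009}) becomes the finiteness of the planar integral $\int \Phi(K_f(z))\, dm(z)$ over a Euclidean disk.

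The substantive step is to invoke Theorem 3.1 in \cite{RSY} with $\lambda_2=e/\pi$. That theorem states, for a measurable $Q$ on a planar disk and a nondecreasing convex $\Phi$ satisfying (\ref{eq1010}), that finiteness of $\int \Phi(Q)\, dm$ forces the divergence of $\int_0^\delta \frac{d\rho}{\rho\, q_0(\rho)}$, where $q_0(\rho)$ is the mean value of $Q$ over $|z|=\rho$. Applying this to $Q=K_f$ and observing that the circle integral $\|K_f\|(p_0,r)=\int_{h(p,p_0)=r}K_f\,ds_h$ equals the hyperbolic length of the circle, which is comparable to $r$ for small $r$, times this mean, I obtain that $\frac{1}{\|K_f\|(p_0,r)}$ is comparable to $\frac{1}{\rho\,q_0(\rho)}$, so that the planar divergence transfers to (\ref{eq8.11.2}) and Theorem \ref{th2} finishes the argument.

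The hard part will be exactly this transfer of the planar criterion of \cite{RSY} to the surface, that is, making sure that passing to local coordinates and replacing the hyperbolic metric and area by the planar ones preserves both the hypotheses and the conclusion of that theorem. This is precisely what the local Ahlfors $2$-regularity of Riemann surfaces recorded in Remark \ref{rmk1} and the bounded, strictly positive conformal factor $\frac{2}{1-\rho^2}$ near the origin are for: every integral in sight is comparable to its planar counterpart, and since the divergence of an integral is unaffected by multiplying the integrand by a factor bounded between two positive constants, condition (\ref{eq8.11.2}) is insensitive to these comparisons and survives the transfer.
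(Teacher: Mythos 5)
Your proposal is correct and follows essentially the same route as the paper: the authors also obtain Theorem \ref{th3} as a consequence of Theorem \ref{th2} by invoking Theorem 3.1 of \cite{RSY} (with $\lambda_2=e/\pi$) to pass from the integral condition (\ref{eq1009})--(\ref{eq1010}) to the divergence condition (\ref{eq8.11.2}), referring to the localization arguments in the proof of Lemma \ref{lem1} for the transfer between the hyperbolic and Euclidean quantities. Your write-up merely fills in the details (comparability of $\|K_f\|(p_0,r)$ with $\rho\,q_0(\rho)$, and of $ds_h$, $dh$ with $|dz|$, $dm$) that the paper leaves implicit in its one-sentence citation.
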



\begin{remark}
{ \rm Note by Theorem 5.1 and Remark 5.1 in \cite{KR$_3$} condition (\ref{eq1010})
is not only necessary but also sufficient for the continuous extension to the boundary
of all mappings $f$ of finite distortion with integral restrictions of the form
(\ref{eq1009}). Note also that by Theorem 2.1 in \cite{RSY} condition (\ref{eq1010})
is equivalent to each of the following conditions where $H(t)=\log\Phi(t)$:
\begin{equation}\label{eq1012}\int\limits_{\Delta}^{\infty}H'(t)\,\frac{dt}{t}=\infty\ ,\end{equation}
\begin{equation}\label{eq1013}\int\limits_{\Delta}^{\infty}
\frac{dH(t)}{t}=\infty\ ,\end{equation}
\begin{equation}\label{eq1014}\int\limits_{\Delta}^{\infty}H(t)\,\frac{dt}{t^2}=\infty\end{equation}
for some $\Delta>0$, and also to each of the equality:
\begin{equation}\label{eq1015}\int\limits_{0}^{\delta}H\left(\frac{1}{t}\right)\,{dt}=\infty\end{equation}
for some $\delta>0$,
\begin{equation}\label{eq1016}\int\limits_{\Delta_*}^{\infty}
\frac{d\eta}{H^{-1}(\eta)}=\infty\end{equation} for some
$\Delta_*>H(+0)$.

\medskip

Here the integral in (\ref{eq1013}) is understood as the Lebesgue-Stiltjes integral,
and the integrals in (\ref{eq1012}),
(\ref{eq1014})--(\ref{eq1016}) as the usual Lebesgue integrals.

\medskip

It is necessary to give more explanations. In the right hand sides of conditions
(\ref{eq1012})--(\ref{eq1016}), we have in mind $+\infty$. If
$\Phi(t)=0$ for $t\in[0,t_*]$, then $H(t)=-\infty$ for $t\in[0,t_*]$,
and we complete the definition in (\ref{eq1012}) setting $H'(t)=0$ for
$t\in[0,t_*]$. Note that conditions (\ref{eq1013}) and (\ref{eq1014}) exclude
that $t_*$ belongs to the interval of integrability because in the contrary case
the left hand sides in (\ref{eq1013}) and (\ref{eq1014}) either are equal $-\infty$
or not determined. Hence we may assume that in (\ref{eq1012}--(\ref{eq1015}) $\delta>t_0$,
correspondingly, $\Delta<1/t_0$ where $t_0:=\sup_{\Phi(t)=0}t$ and $t_0=0$ if $\Phi(0)>0$.

\medskip

Among the conditions counted above, the most interesting one is condition (\ref{eq1014})
that can be written in the form:
\begin{equation}\label{eq5!} \int\limits_{\delta}^{\infty}\log \Phi(t)\ \frac{dt}{t^{2}}\ =\ \infty\ .\end{equation}
}
\end{remark}


\begin{corollary} {\it In particular, the conclusion of Theorem \ref{th3} holds if, for some $\alpha>0$},
\begin{equation}\label{eq111}
\int\limits_{U} e^{\alpha K_{f}(p)}\ dh(p)\ <\ \infty\
.\end{equation}
\end{corollary}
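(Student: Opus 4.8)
The plan is to deduce this corollary from Theorem~\ref{th3} by exhibiting an explicit generating function $\Phi$. The obvious candidate is $\Phi(t)=e^{\alpha t}$: it is a nondecreasing convex function on $\overline{\Bbb R}_{+}$, and with this choice the integral hypothesis (\ref{eq1009}) of Theorem~\ref{th3} reads precisely $\int_{U}e^{\alpha K_{f}(p)}\,dh(p)<\infty$, i.e.\ it coincides with the assumption (\ref{eq111}). Hence it remains only to check that $\Phi(t)=e^{\alpha t}$ satisfies the divergence condition (\ref{eq1010}), after which the conclusion ($f$ extends continuously to $\overline{D}$ with $f(\partial D)=\partial D^{*}$) follows immediately from Theorem~\ref{th3}.

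First I would compute the inverse function. Since $\Phi(0)=1$, I take $\delta>1$ in (\ref{eq1010}) and note that $\Phi^{-1}(\tau)=\tfrac{1}{\alpha}\log\tau$ for $\tau>1$. Then
\begin{equation*}
\int\limits_{\delta}^{\infty}\frac{d\tau}{\tau\,\Phi^{-1}(\tau)}\ =\ \alpha\int\limits_{\delta}^{\infty}\frac{d\tau}{\tau\log\tau}\ =\ \alpha\,\bigl[\log\log\tau\bigr]_{\delta}^{\infty}\ =\ \infty\ ,
\end{equation*}
so (\ref{eq1010}) holds and the hypotheses of Theorem~\ref{th3} are fully met.

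Alternatively, and perhaps more transparently, I would invoke the equivalence recorded in the preceding Remark between (\ref{eq1010}) and condition (\ref{eq5!}); for $\Phi(t)=e^{\alpha t}$ one has $\log\Phi(t)=\alpha t$, whence $\int_{\delta}^{\infty}\log\Phi(t)\,t^{-2}\,dt=\alpha\int_{\delta}^{\infty}t^{-1}\,dt=\infty$ at once. There is no genuine obstacle in this argument; the only point requiring a moment's care is the placement of the lower limit, namely ensuring $\delta>\Phi(0)=1$ so that $\Phi^{-1}$ is well defined on the range of integration, after which the divergence of $\int d\tau/(\tau\log\tau)$ is the standard elementary fact.
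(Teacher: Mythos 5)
Your proposal is correct and is exactly the argument the paper intends: the corollary is stated as an immediate consequence of Theorem \ref{th3} with $\Phi(t)=e^{\alpha t}$, for which (\ref{eq1009}) becomes (\ref{eq111}) and the divergence condition (\ref{eq1010}) (equivalently (\ref{eq5!})) holds by the elementary computation you give. Both your direct verification via $\Phi^{-1}(\tau)=\tfrac{1}{\alpha}\log\tau$ and your alternative check via $\log\Phi(t)=\alpha t$ are sound.
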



The following statement follows from Lemma \ref{lem3} for
$\psi(t)=1/t$.

\medskip

\begin{theorem} \label{th4} {\it Under the hypotheses of Theorem
\ref{th2}, if
\begin{equation}\label{eqOSKRSS10.336a}\int\limits_{\varepsilon<h(p,p_0)<\varepsilon_0}K_f(p)\ \frac{dh(p)}{h(p,p_0)^2}\
=\
o\left(\left[\log\frac{1}{\varepsilon}\right]^2\right)\quad\mbox{as}\
\ \  \varepsilon\to 0 \ \ \ \forall\ p_0\in\partial D\
,\end{equation} then the mapping $f$ is extended by continuity to
$\, \overline{D}$ and $\, f(\partial D)= \partial {D^*}$.}
\end{theorem}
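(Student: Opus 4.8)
The plan is to derive Theorem~\ref{th4} as a direct specialization of Lemma~\ref{lem3}, applied at each boundary point $p_0\in\partial D$ with the explicit gauge $\psi_{p_0,\varepsilon}(t)\equiv 1/t$, chosen independent of both $p_0$ and $\varepsilon$. Since the hypotheses of Theorem~\ref{th2} are in force, $D$ is locally connected at every $p_0\in\partial D$ and $\partial D^*$ is strongly accessible, so in particular it is strongly accessible at every point of the cluster set $C(p_0,f)$; by Corollary~\ref{cor1} this cluster set lies in $\partial D^*$, and it is nonempty because $\overline{D^*}$ is compact (as noted in the proof of Theorem~\ref{th1}), so any sequence $p_k\to p_0$ in $D$ yields a convergent subsequence of $f(p_k)$. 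Thus the structural hypotheses of Lemma~\ref{lem3} are met at each $p_0$.

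First I would verify the admissibility of the gauge. With $\psi_{p_0,\varepsilon}(t)=1/t$ one computes, for $0<\varepsilon<\varepsilon_0$,
$$
I_{p_0,\varepsilon_0}(\varepsilon)\ =\ \int\limits_{\varepsilon}^{\varepsilon_0}\frac{dt}{t}\ =\ \log\frac{\varepsilon_0}{\varepsilon}\ ,
$$
which is strictly positive and finite, so condition (\ref{eqI}) holds. Moreover $I_{p_0,\varepsilon_0}(\varepsilon)=\log\varepsilon_0+\log(1/\varepsilon)\sim\log(1/\varepsilon)$ as $\varepsilon\to 0$, whence $I^2_{p_0,\varepsilon_0}(\varepsilon)\sim[\log(1/\varepsilon)]^2$. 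Consequently the right-hand side $o(I^2_{p_0,\varepsilon_0}(\varepsilon))$ in (\ref{eqpsi}) coincides with $o([\log(1/\varepsilon)]^2)$, while the left-hand integral of (\ref{eqpsi}) becomes exactly
$$
\int\limits_{\varepsilon<h(p,p_0)<\varepsilon_0}K_f(p)\,\frac{dh(p)}{h(p,p_0)^2}\ .
$$
Hence hypothesis (\ref{eqOSKRSS10.336a}) is precisely condition (\ref{eqpsi}) for this gauge.

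With both conditions of Lemma~\ref{lem3} verified at each $p_0\in\partial D$, I would conclude that $f$ extends by continuity to every boundary point, with $f(p_0)\in\partial D^*$; since $p_0$ was arbitrary, this gives a continuous extension to all of $\overline{D}$. For the surjectivity $f(\partial D)=\partial D^*$ I would repeat the argument from the proof of Theorem~\ref{th2}, invoking Proposition~2.5 in \cite{RSa} (equivalently Proposition~13.5 in \cite{MRSY}). The computation itself is elementary, so the only point requiring genuine care — rather than presenting a real obstacle — is confirming that the asymptotic equivalence $I^2_{p_0,\varepsilon_0}(\varepsilon)\sim[\log(1/\varepsilon)]^2$ legitimately transfers the little-$o$ estimate between the two formulations, and that strong accessibility of all of $\partial D^*$ indeed furnishes strong accessibility at a point of each nonempty cluster set $C(p_0,f)$.
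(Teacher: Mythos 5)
Your proposal is correct and follows exactly the paper's own route: the authors derive Theorem \ref{th4} from Lemma \ref{lem3} with the choice $\psi(t)=1/t$, which is precisely your argument, only with the details (the computation $I_{p_0,\varepsilon_0}(\varepsilon)=\log(\varepsilon_0/\varepsilon)\sim\log(1/\varepsilon)$, the verification of (\ref{eqI}) and (\ref{eqpsi}), and the appeal to Proposition 2.5 in \cite{RSa} for $f(\partial D)=\partial D^*$) written out explicitly. No gaps.
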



\begin{remark}
{\rm Choosing in Lemma \ref{lem3} the function $\psi(t)=1/(t\log 1/t)$ instead of $\psi(t)=1/t$,
we obtain that condition (\ref{eqOSKRSS10.336a}) can be replaced by the conditions
\begin{equation}\label{eqOSKRSS10.336b}
\int\limits_{\varepsilon<h(p,p_0)<\varepsilon_0}\frac{K_f(p)\ dh(p)}
{\left(h(p,p_0)\ \log\frac{1}{h(p,p_0)}\right)^2}\ =\
o\left(\left[\log\log\frac{1}{\varepsilon}\right]^2\right)\quad\mbox{as}\
\ \  \varepsilon\to 0\ .\end{equation} Similarly, condition
(\ref{eqlog}) by Theorem \ref{th2} can be replaced by the weaker
condition
\begin{equation}\label{eqOSKRSS10.336h} k_{z_0}(\varepsilon)\ =\ O\left(\log\frac{1}{\varepsilon}\log\,\log\frac{1}{\varepsilon}\right)\quad\mbox{as}\
\ \  \varepsilon\to 0\ .\end{equation} Of course, we could give here a series
of the corresponding conditions of the logarithmic type applying suitable functions $\psi(t)$. }
\end{remark}


Following paper \cite{RSa}, cf. \cite{IR}, see also Section 13.4 in
\cite{MRSY}, Section 2.3 in \cite{GRSY}, we say that a function
$\varphi:\mathbb{S}\to{\Bbb R}$ has {\bf finite mean oscillation} at
a point $p_0\in \mathbb{S}$, written $\varphi\in{\rm FMO}(p_0)$, if
\begin{equation}\label{eqFMO}\limsup\limits_{\varepsilon\to 0}\ \dashint_{B(p_0,\,\varepsilon)}|\
\varphi(p)-\widetilde{ \varphi}_{\varepsilon}|\ dh(p)\ <\
\infty\end{equation} where $\widetilde{{ \varphi}}_{\varepsilon}$ is
the mean value of $\varphi$ over the disk
$B(p_0,\,\varepsilon)=\{p\in\mathbb{S}:\ h(p,p_0)<\varepsilon\}$.

\medskip

By Remark \ref{rmk1} and Lemma \ref{lem3} with the choice
$\psi_{p_0,\,\varepsilon}(t)\equiv 1/t\log\frac{1}{t}$, in view of
Lemma 4.1 and Remark 4.1 in \cite{RSa}, see also Lemma 13.2 and
Remark 13.3 in \cite{MRSY}, we obtain the following result.

\medskip

\begin{theorem} \label{th5} {\it If under the hypotheses of Theorem \ref{th2}, for
some $Q:\mathbb{S}\to{\Bbb R}^+$,
\begin{equation}\label{eq1007}{K_{f}(p)\ \leqslant\ Q(p)\in\ {\rm FMO}(p_0)}\ \ \ \ \ \ \ \ \forall\ p_0\in\partial D\
.\end{equation}

Then the mapping $f$ is extended by continuity to $\, \overline{D}$
and $\, f(\partial D)= \partial {D^*}$.}
\end{theorem}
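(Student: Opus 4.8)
The plan is to invoke Lemma \ref{lem3} with the $\varepsilon$-independent weight $\psi_{p_0,\varepsilon}(t)\equiv 1/\left(t\,\log\frac{1}{t}\right)$ and to verify its two hypotheses (\ref{eqpsi})--(\ref{eqI}) from the finite mean oscillation of $Q$ (hence of $K_f$, since $K_f\leqslant Q$). By Remark \ref{rmk1}, $f$ is a ring $Q$-homeomorphism with $Q=K_f$, so Lemma \ref{lem3} applies once these integral conditions are checked.

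First I would evaluate $I_{p_0,\varepsilon_0}(\varepsilon)$ for the chosen weight. A direct integration gives, for $0<\varepsilon_0<1$,
$$I_{p_0,\varepsilon_0}(\varepsilon)\ =\ \int\limits_{\varepsilon}^{\varepsilon_0}\frac{dt}{t\,\log\frac{1}{t}}\ =\ \log\log\frac{1}{\varepsilon}\ -\ \log\log\frac{1}{\varepsilon_0}\ ,$$
which is positive and finite for all sufficiently small $\varepsilon$ and tends to $+\infty$ as $\varepsilon\to 0$; this verifies (\ref{eqI}) and yields $I_{p_0,\varepsilon_0}(\varepsilon)\asymp\log\log\frac{1}{\varepsilon}$.

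The decisive step is condition (\ref{eqpsi}). With $\psi^2_{p_0,\varepsilon}(t)=1/\left(t\,\log\frac{1}{t}\right)^2$ its left-hand side becomes
$$\int\limits_{\varepsilon<h(p,p_0)<\varepsilon_0}\frac{K_f(p)}{\left(h(p,p_0)\,\log\frac{1}{h(p,p_0)}\right)^2}\ dh(p)\ .$$
Here I would apply the FMO estimate: since $K_f(p)\leqslant Q(p)$ with $Q\in{\rm FMO}(p_0)$, and since Riemann surfaces are locally Ahlfors $2$-regular with respect to the metric and measure $h$ (Remark \ref{rmk1}), this logarithmic integral is $O\left(\log\log\frac{1}{\varepsilon}\right)$ as $\varepsilon\to 0$, which is precisely the content of Lemma 4.1 in \cite{RSa} (see also Lemma 13.2 in \cite{MRSY}). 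Its proof rests on a dyadic decomposition of the annulus together with the finite-mean-oscillation bound on each dyadic ring, reduced to local coordinates in which the hyperbolic metric is comparable to the Euclidean one near $p_0$, exactly as in the proof of Lemma \ref{lem1}. Comparing with $I^2_{p_0,\varepsilon_0}(\varepsilon)\asymp\left(\log\log\frac{1}{\varepsilon}\right)^2$, we obtain that this integral is $o\!\left(I^2_{p_0,\varepsilon_0}(\varepsilon)\right)$, so (\ref{eqpsi}) holds.

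The main obstacle is exactly this passage from the FMO hypothesis to the logarithmic integral bound; the remaining arguments are bookkeeping. Once (\ref{eqpsi})--(\ref{eqI}) are in force, Lemma \ref{lem3} provides the continuous extension of $f$ to every $p_0\in\partial D$ together with $f(p_0)\in\partial D^*$, hence the continuous extension to $\overline{D}$. Finally, the identity $f(\partial D)=\partial D^*$ follows exactly as in the proof of Theorem \ref{th2}, from Proposition 2.5 in \cite{RSa} (see also Proposition 13.5 in \cite{MRSY}).
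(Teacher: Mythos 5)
Your proposal is correct and follows essentially the same route as the paper: the authors obtain Theorem \ref{th5} precisely by combining Remark \ref{rmk1} and Lemma \ref{lem3} with the choice $\psi_{p_0,\varepsilon}(t)\equiv 1/\left(t\log\frac{1}{t}\right)$, invoking Lemma 4.1 and Remark 4.1 in \cite{RSa} (Lemma 13.2 and Remark 13.3 in \cite{MRSY}) for the FMO estimate, and your verification of (\ref{eqpsi})--(\ref{eqI}), including the computation $I_{p_0,\varepsilon_0}(\varepsilon)\asymp\log\log\frac{1}{\varepsilon}$ and the comparison $O\left(\log\log\frac{1}{\varepsilon}\right)=o\left(\left[\log\log\frac{1}{\varepsilon}\right]^2\right)$, simply makes explicit what the paper leaves implicit. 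The concluding step $f(\partial D)=\partial D^*$ via Proposition 2.5 in \cite{RSa} also matches the paper's argument in Theorem \ref{th2}.
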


\medskip

By Corollary 4.1 in \cite{RSa}, see also Corollary 13.3 in
\cite{MRSY}, we have also from Theorem \ref{th5} the next statement:

\medskip

\begin{corollary} {\it In particular, the conclusion of Theorem \ref{th5} holds if
\begin{equation}\label{eq111}\limsup\limits_{\varepsilon\to 0}\
\dashint_{B(p_0,\,\varepsilon)}K_{f}(p)\ dh(p)\ <\ \infty\ \ \ \ \ \
\ \ \forall\ p_0\in\partial D\ .\end{equation}}
\end{corollary}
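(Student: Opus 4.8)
The plan is to deduce this corollary directly from Theorem \ref{th5} by checking its hypothesis with the choice $Q=K_f$. Since $K_f(p)\leqslant K_f(p)$ holds trivially, the entire task reduces to showing that condition (\ref{eq111}) forces $K_f\in{\rm FMO}(p_0)$ at every boundary point $p_0\in\partial D$; once this membership is established, Theorem \ref{th5} applies verbatim and gives the continuous extension of $f$ to $\overline{D}$ together with $f(\partial D)=\partial D^*$.

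First I would fix $p_0\in\partial D$ and abbreviate by $\widetilde{K}_\varepsilon:=\dashint_{B(p_0,\varepsilon)}K_f(p)\,dh(p)$ the mean value of $K_f$ over the hyperbolic disk $B(p_0,\varepsilon)$, in exact parallel with the notation $\widetilde{\varphi}_\varepsilon$ of the FMO definition (\ref{eqFMO}). Since $K_f\geqslant 0$, the mean value $\widetilde{K}_\varepsilon$ is itself nonnegative, so $|\widetilde{K}_\varepsilon|=\widetilde{K}_\varepsilon$. Using the triangle inequality under the averaging, together with $|K_f|=K_f$ and the fact that the average of the constant $\widetilde{K}_\varepsilon$ over $B(p_0,\varepsilon)$ equals $\widetilde{K}_\varepsilon$, I would estimate
\begin{equation}\label{eqcorFMO}
\dashint_{B(p_0,\varepsilon)}|K_f(p)-\widetilde{K}_\varepsilon|\,dh(p)\ \leqslant\ \dashint_{B(p_0,\varepsilon)}K_f(p)\,dh(p)\ +\ \widetilde{K}_\varepsilon\ =\ 2\,\widetilde{K}_\varepsilon\ .
\end{equation}
Taking $\limsup$ as $\varepsilon\to 0$ in (\ref{eqcorFMO}) and invoking hypothesis (\ref{eq111}) yields
$$\limsup\limits_{\varepsilon\to 0}\ \dashint_{B(p_0,\varepsilon)}|K_f(p)-\widetilde{K}_\varepsilon|\,dh(p)\ \leqslant\ 2\,\limsup\limits_{\varepsilon\to 0}\ \widetilde{K}_\varepsilon\ <\ \infty\ ,$$
which is precisely the finite mean oscillation condition (\ref{eqFMO}) for $K_f$ at $p_0$. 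Hence $K_f\in{\rm FMO}(p_0)$ for every $p_0\in\partial D$.

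There is no genuine obstacle in this argument: it is the standard implication that a locally bounded integral mean of a nonnegative function entails finite mean oscillation, after which the heavy machinery is supplied entirely by Theorem \ref{th5}. The only point deserving a little care is that the averaging in both the hypothesis (\ref{eq111}) and the FMO definition (\ref{eqFMO}) is taken over the \emph{same} hyperbolic disks $B(p_0,\varepsilon)$ against the hyperbolic area $dh$ on the Riemann surface, so that the elementary estimate (\ref{eqcorFMO}) and the hypothesis of Theorem \ref{th5} refer to one and the same notion of mean value; this consistency is already built into the setup of Section 2, where the element of area $dh$ and the distance $h$ are shown to be intrinsic on $\mathbb{D}/G$ independently of the fundamental set and the local coordinates.
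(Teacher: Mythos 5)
Your proof is correct and follows essentially the same route as the paper: the paper deduces the corollary from Theorem \ref{th5} by citing Corollary 4.1 in \cite{RSa} (Corollary 13.3 in \cite{MRSY}), which is exactly the fact you prove inline, namely that bounded integral means of the nonnegative function $K_f$ imply $K_f\in{\rm FMO}(p_0)$ via the estimate $\dashint|K_f-\widetilde{K}_\varepsilon|\,dh\leqslant 2\,\widetilde{K}_\varepsilon$. The only difference is that you make the cited lemma self-contained, which is harmless.
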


\medskip

\begin{remark}
{\rm Note that Lemma \ref{lem3} makes possible also to realize the point-wise analysis:
if the given conditions for the dilatation hold at one boundary point of $D$, then
the extension of the mappings by continuity holds at this point. However, not to be repeated
we will not formulate here the corresponding point-wise results in the explicit form.}
\end{remark}

\bigskip

\section{On homeomorphic extension to the boundary}

Combining Theorem \ref{th1} and results of the last section, we obtain a series
of effective criteria of the homeomorphic extension to the boundary of the mappings
with finite distortion between domains on Riemann surfaces. As it was before,
here we assume that the function $K_f$ is extended by zero outside of the domain $D$.

\medskip

\begin{theorem} \label{th6} {\it Let under the hypotheses of Theorem \ref{th1}
\begin{equation}\label{eqh8.11.2}\int\limits_{0}^{\delta}
\frac{dr}{||K_{f}||\, (p_0,r)}\ =\ \infty\ \ \ \ \ \ \ \ \forall\
p_0\in\partial D\end{equation} where
\begin{equation}\label{eqh8.11.4}
||K_{f}||\, (p_0,r)\ = \int\limits_{h(p,p_0)=r}K_{f}(p)\ ds_h(p)\
.\end{equation}

Then the mapping $f$ is extended to the homeomorphism of $\
\overline{D}$ onto $\ \overline{D^*}$.}
\end{theorem}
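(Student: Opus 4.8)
The plan is to obtain the homeomorphic extension by gluing together the two one-sided continuous extensions furnished by Theorems \ref{th1} and \ref{th2}, and then to verify by a density argument that these extensions are mutually inverse, so that the glued map is a homeomorphism of $\overline{D}$ onto $\overline{D^*}$.

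First I would check that the hypotheses of Theorem \ref{th2} are in force. Since we work under the hypotheses of Theorem \ref{th1}, the boundary $\partial D^*$ is weakly flat; as observed in Section 4, weak flatness of a domain at a boundary point implies strong accessibility at that point, so $\partial D^*$ is strongly accessible. Together with the local connectedness of $D$ on its boundary, the fact that $f$ is a homeomorphism of finite distortion with $K_f\in L^1_{\mathrm{loc}}$, and the observation that condition (\ref{eqh8.11.2}) coincides verbatim with condition (\ref{eq8.11.2}), all hypotheses of Theorem \ref{th2} are satisfied. Hence $f$ extends by continuity to a map $\overline{f}:\overline{D}\to\overline{D^*}$ with $\overline{f}(\partial D)=\partial D^*$. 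On the other hand, Theorem \ref{th1} applies directly under the present hypotheses and yields a continuous extension $\overline{g}:\overline{D^*}\to\overline{D}$ of the inverse mapping $g=f^{-1}$.

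It then remains to show that $\overline{f}$ is a homeomorphism with inverse $\overline{g}$. For every $p\in D$ we have $\overline{g}(\overline{f}(p))=g(f(p))=p$, so the continuous map $\overline{g}\circ\overline{f}$ agrees with the identity on the dense subset $D$ of $\overline{D}$; since $\overline{D}$ is Hausdorff, this forces $\overline{g}\circ\overline{f}=\mathrm{id}_{\overline{D}}$. Symmetrically, $\overline{f}\circ\overline{g}=\mathrm{id}_{\overline{D^*}}$. Thus $\overline{f}$ is a continuous bijection of $\overline{D}$ onto $\overline{D^*}$ whose inverse $\overline{g}$ is again continuous, i.e.\ a homeomorphism. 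Alternatively, one may invoke that $\overline{D}$ is compact---being a closed subset of the compact metrizable space $\overline{\mathbb{S}}$ guaranteed by the Uryson theorem---while $\overline{D^*}$ is Hausdorff, so that any continuous bijection between them is automatically a homeomorphism.

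I expect the genuine mathematical content to reside entirely in Theorems \ref{th1} and \ref{th2}, which are already established through Lemma \ref{lem1}, Remark \ref{rmk1}, and the cited boundary results in metric spaces. The only steps requiring care here are the remark that weak flatness implies strong accessibility, so that Theorem \ref{th2} may legitimately be applied under the weaker-looking hypotheses of Theorem \ref{th1}, and the density--continuity argument identifying $\overline{g}$ as a two-sided inverse of $\overline{f}$; the latter is routine once both one-sided extensions are in hand, and presents no essential obstacle.
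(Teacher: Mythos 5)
Your proposal is correct and follows essentially the same route as the paper, which gives no separate argument for this theorem but obtains it precisely by combining Theorem \ref{th1} (continuous extension of the inverse map under weak flatness) with Theorem \ref{th2} (continuous extension of the direct map, applicable since weak flatness implies strong accessibility and condition (\ref{eqh8.11.2}) coincides with (\ref{eq8.11.2})). Your explicit density argument showing the two extensions are mutually inverse is exactly the routine gluing step the paper leaves implicit.
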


\medskip

\begin{corollary} {\it In particular, the conclusion of Theorem \ref{th6} holds if
\begin{equation}\label{eqhLOG}
K_f(p)\ \ =O{\left(\log\frac{1}{h(p,p_0)}\right)} \ \ \ \ \ \
\mbox{as}\ \ \ p\to p_0\ \ \ \ \ \ \ \ \forall\ p_0\in\partial D
\end{equation}
or, more generally,
\begin{equation}\label{eqhlog}
k_{p_{0}}(\varepsilon)=O{\left(\log\frac{1}{\varepsilon}\right)} \ \
\ \ \ \ \mbox{as}\ \ \ \varepsilon\to 0\ \ \ \ \ \ \ \ \forall\
p_0\in\partial D
\end{equation} where $k_{p_{0}}(\varepsilon)$ is the mean value of the function $K_{f}$ over the circle $
h(p,p_0)=\varepsilon$.}
\end{corollary}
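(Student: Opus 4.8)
The plan is to show that either logarithmic hypothesis forces the divergence condition (\ref{eqh8.11.2}) of Theorem \ref{th6}, after which the homeomorphic extension is immediate by that theorem. The bridge between the integral $\|K_f\|(p_0,r)$ in (\ref{eqh8.11.4}) and the circular mean $k_{p_0}(r)$ is elementary: since $k_{p_0}(r)$ is by definition the average of $K_f$ over the circle $\{h(p,p_0)=r\}$ with respect to the hyperbolic length $ds_h$, one has
$$\|K_f\|(p_0,r)\ =\ k_{p_0}(r)\cdot L(r),$$
where $L(r)$ is the $ds_h$-length of that circle. For $r$ small enough the ball $B(p_0,r)$ is hyperbolically isometric to a disk of hyperbolic radius $r$ about the origin (exactly the regime $d=h$ used in the proof of Lemma \ref{lem1}), so the level set is a genuine hyperbolic circle and $L(r)=2\pi\sinh r$ by the standard circumference formula in the Poincar\'e disk. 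I would record both facts at the outset.

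First I would note that the pointwise bound (\ref{eqhLOG}) is subsumed by the mean bound (\ref{eqhlog}): if $K_f(p)=O(\log(1/h(p,p_0)))$ as $p\to p_0$, then integrating this bound over the circle $\{h(p,p_0)=r\}$ and dividing by $L(r)$ gives at once $k_{p_0}(r)=O(\log(1/r))$. Hence it suffices to treat the more general hypothesis (\ref{eqhlog}).

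Next, assuming (\ref{eqhlog}), I would fix $p_0\in\partial D$, choose $C>0$ and $\delta>0$ with $k_{p_0}(r)\leq C\log(1/r)$ for $0<r<\delta$, and shrink $\delta$ so that $\sinh r\leq 2r$ there. Substituting $\|K_f\|(p_0,r)=2\pi\,k_{p_0}(r)\,\sinh r$ into (\ref{eqh8.11.2}) yields
$$\int\limits_0^{\delta}\frac{dr}{\|K_f\|(p_0,r)}\ =\ \int\limits_0^{\delta}\frac{dr}{2\pi\,k_{p_0}(r)\,\sinh r}\ \geq\ \frac{1}{4\pi C}\int\limits_0^{\delta}\frac{dr}{r\,\log(1/r)}\ =\ \infty,$$
the last integral diverging because the substitution $u=\log(1/r)$ turns it into $\int^{\infty}du/u$. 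Since $p_0\in\partial D$ was arbitrary, (\ref{eqh8.11.2}) holds at every boundary point and Theorem \ref{th6} applies.

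The only delicate point is the identity $\|K_f\|(p_0,r)=k_{p_0}(r)\,L(r)$ together with the value $L(r)=2\pi\sinh r$: one must confirm that for small $r$ the level set $\{h(p,p_0)=r\}$ is an isometric copy of a hyperbolic circle, rather than a set distorted by the action of the group $G$, so that the circumference formula is legitimate. As in Lemma \ref{lem1}, this follows because for $h(p,p_0)$ below the injectivity radius at $p_0$ the intrinsic metric $d$ coincides with $h$; the remaining comparison $\sinh r\sim r$ and the classical divergence of $\int dr/(r\log(1/r))$ are routine.
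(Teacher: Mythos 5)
Your proposal is correct and is essentially the paper's intended argument: this corollary is stated without a separate proof precisely because the standard verification is to deduce the divergence condition (\ref{eqh8.11.2}) of Theorem \ref{th6} from (\ref{eqhlog}) via the identity $\Vert K_f\Vert(p_0,r)=k_{p_0}(r)\,L(r)$ with $L(r)=2\pi\sinh r\sim 2\pi r$ for $r$ below the injectivity radius, together with the classical divergence of $\int_0 dr/\bigl(r\log\frac{1}{r}\bigr)$. Your reduction of the pointwise bound (\ref{eqhLOG}) to the mean bound (\ref{eqhlog}) by averaging over the circle, and your care in justifying the circumference formula where the metric $d$ coincides with $h$, match the paper's setup (cf. the proof of Lemma \ref{lem1} and the corresponding corollary of Theorem \ref{th2}).
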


\medskip

\begin{theorem} \label{th7} {\it Under the hypotheses of Theorem \ref{th1}, suppose that
\begin{equation}\label{eqh1009}\int\limits_U\Phi(K_{f}(p))\ dh(p)\ <\ \infty\end{equation}
in a neighborhood $U$ of $\partial D$ where $\Phi:\overline{\Bbb
R}_{+}\to{\overline{\Bbb R}_{+}} $ is a nondecreasing convex
function with the condition
\begin{equation}\label{eqh1010}\int\limits_{\delta}^{\infty}\frac{d\tau}{\tau\Phi^{-1}(\tau)}=\infty\end{equation}
for some $\delta>\Phi(0)$. Then the mapping $f$ is extended to a homeomorphism of $\ \overline{D}$ onto $\ \overline{D^*}$.}
\end{theorem}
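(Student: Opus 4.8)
The plan is to reduce Theorem~\ref{th7} to Theorem~\ref{th6} by exactly the device that already turned Theorem~\ref{th2} into Theorem~\ref{th3}. First I would invoke Theorem 3.1 in \cite{RSY} (with $\lambda_2=e/\pi$): for a nondecreasing convex $\Phi$, the integrability (\ref{eqh1009}) together with the divergence (\ref{eqh1010}) forces, at every $p_0\in\partial D$, the divergence condition (\ref{eqh8.11.2}), i.e. $\int_0^\delta dr/\|K_f\|(p_0,r)=\infty$. Since all the remaining assumptions of Theorem~\ref{th7} are literally the hypotheses of Theorem~\ref{th1}, the full hypotheses of Theorem~\ref{th6} are then in force, and the desired homeomorphic extension of $f$ from $\overline D$ onto $\overline{D^*}$ follows at once from Theorem~\ref{th6}.

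Should one prefer a self-contained derivation rather than quoting Theorem~\ref{th6}, I would instead glue together the two one-sided extension results. The point is that the hypotheses of Theorem~\ref{th1} already contain those of Theorem~\ref{th2}: here $\partial D^*$ is assumed weakly flat, and as noted just before Proposition~\ref{pro2} weak flatness at a boundary point implies strong accessibility there, while local connectedness of $D$ on $\partial D$, the finite distortion of $f$, and $K_f\in L^1_{\rm loc}$ are common to both. Adjoining the $\Phi$-condition, the hypotheses of Theorem~\ref{th3} are met, so Theorem~\ref{th3} supplies a continuous extension $\overline f:\overline D\to\overline{D^*}$ with $\overline f(\partial D)=\partial D^*$. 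Simultaneously Theorem~\ref{th1} applies verbatim and yields a continuous extension $\overline g:\overline{D^*}\to\overline D$ of the inverse $g=f^{-1}$.

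It then remains to fuse $\overline f$ and $\overline g$ into a homeomorphism. For $z\in D$ one has $\overline g(\overline f(z))=g(f(z))=z$; since $D$ is dense in $\overline D$ and both $\overline g\circ\overline f$ and the identity are continuous, $\overline g\circ\overline f=\mathrm{id}_{\overline D}$, and symmetrically $\overline f\circ\overline g=\mathrm{id}_{\overline{D^*}}$. Hence $\overline f$ is a continuous bijection with continuous inverse $\overline g$, that is, a homeomorphism of $\overline D$ onto $\overline{D^*}$; here I would use, exactly as in the proof of Theorem~\ref{th1}, that $\overline{\mathbb S}$ is metrizable by the Uryson theorem, so that $\overline D$ and $\overline{D^*}$ are compact.

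The genuine analytic content lives entirely inside Theorems~\ref{th1} and~\ref{th3}, and ultimately inside Lemma~\ref{lem1} and the ring $Q$-homeomorphism machinery of \cite{Smol} and \cite{RSY}. Consequently the only steps here that require care are the single nontrivial reduction — that the $\Phi$-condition really entails the divergence condition (\ref{eqh8.11.2}) of Theorem~\ref{th6}, which is precisely the service rendered by \cite{RSY} — and, in the self-contained route, the verification that the two one-sided extensions are true mutual inverses on the boundary; the latter rests on the density of $D$ in $\overline D$ together with the surjectivity forced by $\overline f\circ\overline g=\mathrm{id}_{\overline{D^*}}$, with no further estimates needed.
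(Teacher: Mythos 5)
Your proposal is correct and follows essentially the same route as the paper, which states Theorem~\ref{th7} precisely as the combination of Theorem~\ref{th1} (continuous extension of the inverse, using weak flatness of $\partial D^*$) with Theorem~\ref{th3} (continuous extension of the direct mapping under the $\Phi$-condition, via Theorem 3.1 of \cite{RSY}), the gluing of the two one-sided extensions into mutually inverse homeomorphisms being left implicit there. Your explicit density argument for $\overline g\circ\overline f=\mathrm{id}_{\overline D}$ and $\overline f\circ\overline g=\mathrm{id}_{\overline{D^*}}$ correctly fills in that implicit step.
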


\begin{corollary} {\it In particular, the conclusion of Theorem \ref{th7} holds if, for some
$\alpha>0$}, in a neighborhood $U$ of $\partial D$
\begin{equation}\label{eqh1011}
\int\limits_U e^{\alpha K_{f}(p)}\ dh(p)\ <\ \infty\ .\end{equation}
\end{corollary}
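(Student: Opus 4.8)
The plan is to deduce this corollary directly from Theorem~\ref{th7} by choosing an explicit admissible function $\Phi$ for which the exponential integrability hypothesis (\ref{eqh1011}) becomes a special case of (\ref{eqh1009}) while the divergence condition (\ref{eqh1010}) is automatically satisfied. The natural candidate is $\Phi(t)=e^{\alpha t}$ with the given $\alpha>0$.

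First I would check that $\Phi(t)=e^{\alpha t}$ meets the structural requirements imposed on $\Phi$ in Theorem~\ref{th7}. For $\alpha>0$ this function maps $\overline{\Bbb R}_{+}$ into $\overline{\Bbb R}_{+}$, it is nondecreasing, and it is convex, so it is an admissible choice. With this $\Phi$, the integral hypothesis (\ref{eqh1009}) reads $\int_U e^{\alpha K_f(p)}\,dh(p)<\infty$, which is exactly the assumption (\ref{eqh1011}). Thus the integrability requirement of Theorem~\ref{th7} holds by hypothesis, with no further work.

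Next I would verify the key divergence condition (\ref{eqh1010}). Inverting $\tau=e^{\alpha t}$ on the range $t\geqslant 0$ gives $\Phi^{-1}(\tau)=\tfrac{1}{\alpha}\log\tau$ for $\tau\geqslant 1$, whence
\[
\int\limits_{\delta}^{\infty}\frac{d\tau}{\tau\,\Phi^{-1}(\tau)}\ =\ \alpha\int\limits_{\delta}^{\infty}\frac{d\tau}{\tau\,\log\tau}\ .
\]
Since $\Phi(0)=1$, I may take any $\delta>1$, so that $\log\delta>0$; the substitution $u=\log\tau$ then reduces the last integral to $\int_{\log\delta}^{\infty}du/u=\infty$. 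Hence (\ref{eqh1010}) is fulfilled for this $\Phi$.

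With all hypotheses of Theorem~\ref{th7} verified for $\Phi(t)=e^{\alpha t}$, the conclusion—namely the homeomorphic extension of $f$ to $\overline{D}$ onto $\overline{D^*}$—follows immediately by applying that theorem. The only computational point is the logarithmic divergence of $\int d\tau/(\tau\log\tau)$, which is a standard elementary fact and presents no obstacle; everything else is a direct matching of the exponential hypothesis against the general convex-function criterion of Theorem~\ref{th7}.
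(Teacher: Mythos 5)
Your proposal is correct and is exactly the intended argument: the paper states this corollary without proof as an immediate consequence of Theorem~\ref{th7} with $\Phi(t)=e^{\alpha t}$, whose admissibility and the divergence of $\int_{\delta}^{\infty}\frac{d\tau}{\tau\,\Phi^{-1}(\tau)}=\alpha\int_{\delta}^{\infty}\frac{d\tau}{\tau\log\tau}$ for $\delta>\Phi(0)=1$ are verified just as you do (equivalently, one could invoke the paper's condition (\ref{eq5!}), since $\log\Phi(t)=\alpha t$ gives $\int_{\delta}^{\infty}\alpha\,\frac{dt}{t}=\infty$). No gaps; your attention to $\delta>1$ so that $\log\delta>0$ is the only delicate point, and you handled it.
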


\begin{theorem} \label{th8} {\it Let under the hypotheses of Theorem \ref{th1}
\begin{equation}\label{eqhOSKRSS10.336a}\int\limits_{\varepsilon<h(p,p_0)<\varepsilon_0}K_f(p)\ \frac{dh(p)}{h(p,p_0)^2}\
=\
o\left(\left[\log\frac{1}{\varepsilon}\right]^2\right)\quad\mbox{as}\
\ \  \varepsilon\to 0\ \ \ \ \ \ \ \ \forall\ p_0\in\partial D\
.\end{equation}

Then the mapping $f$ is extended to a homeomorphism of $\
\overline{D}$ onto $\ \overline{D^*}$.}
\end{theorem}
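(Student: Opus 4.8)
The plan is to combine the two one-sided boundary-extension results already at our disposal --- the continuous extension of the inverse mapping furnished by Theorem \ref{th1} and the continuous extension of the direct mapping furnished by Theorem \ref{th4} --- and then to glue them into a single homeomorphism by a compactness argument.

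First I would check that the hypotheses of Theorem \ref{th1} already entail those of Theorem \ref{th4}. Under the hypotheses of Theorem \ref{th1} the domain $D$ is locally connected on its boundary and $\partial D^*$ is weakly flat; since weak flatness at a point implies strong accessibility at that point (Section 4), $\partial D^*$ is strongly accessible, so the hypotheses of Theorem \ref{th2} hold. The integral condition (\ref{eqhOSKRSS10.336a}) coincides with condition (\ref{eqOSKRSS10.336a}) of Theorem \ref{th4}, so Theorem \ref{th4} applies and provides a continuous extension $\overline{f}\colon\overline{D}\to\overline{D^*}$ with $\overline{f}(\partial D)=\partial D^*$. Independently, Theorem \ref{th1} applies verbatim and yields a continuous extension $\overline{g}\colon\overline{D^*}\to\overline{D}$ of $g=f^{-1}$.

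The remaining and decisive step is to show that $\overline{f}$ and $\overline{g}$ are mutually inverse. As recalled in the proof of Theorem \ref{th1}, $\overline{\mathbb{S}}$ and $\overline{\mathbb{S}^*}$ are metrizable and the closures $\overline{D}$, $\overline{D^*}$ are compact. I would argue by sequences: for $p\in\overline{D}$, take $p_k\in D$ with $p_k\to p$; then $f(p_k)\to\overline{f}(p)\in\overline{D^*}$, so by continuity of $\overline{g}$ one has $\overline{g}(\overline{f}(p))=\lim_k\overline{g}(f(p_k))=\lim_k g(f(p_k))=\lim_k p_k=p$, i.e. $\overline{g}\circ\overline{f}=\mathrm{id}_{\overline{D}}$. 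The symmetric computation gives $\overline{f}\circ\overline{g}=\mathrm{id}_{\overline{D^*}}$. Hence $\overline{f}$ is a continuous bijection of $\overline{D}$ onto $\overline{D^*}$ with continuous inverse $\overline{g}$, which is the required homeomorphism.

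I expect the main obstacle to lie not in the analysis --- which is entirely encapsulated in Lemma \ref{lem1}, Theorem \ref{th1} and Theorem \ref{th4} and may be quoted --- but in the bookkeeping needed to make the two separately constructed extensions fit together. In particular I would pay attention to the inclusion $\overline{f}(\overline{D})\subseteq\overline{D^*}$, which is exactly what the relation $\overline{f}(\partial D)=\partial D^*$ supplies and which is what allows $\overline{g}$ to be composed with $\overline{f}$; the compactness of $\overline{D}$ and $\overline{D^*}$ is then what upgrades the continuous bijection to a homeomorphism.
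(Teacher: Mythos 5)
Your proposal is correct and follows essentially the same route as the paper, which obtains Theorem \ref{th8} precisely by combining the boundary extension of the inverse map from Theorem \ref{th1} with the boundary extension of the direct map from Theorem \ref{th4} (noting, as you do, that weak flatness of $\partial D^*$ implies strong accessibility, so the hypotheses of Theorem \ref{th2} are met). Your explicit sequential argument that the two extensions are mutually inverse is a correct filling-in of the gluing step the paper leaves implicit; note only that once $\overline{g}$ is shown to be a continuous inverse of $\overline{f}$, the homeomorphism property is immediate and the final appeal to compactness is not needed.
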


\medskip

\begin{theorem} \label{th9} {\it Let under the hypotheses of Theorem
\ref{th1}, for some $Q:\mathbb{S}\to\mathbb{R}^+$,
\begin{equation}\label{eqh1007}{K_{f}(p)\ \leqslant\ Q(p)\in\ {\rm FMO}(p_0)}\ \ \ \ \ \ \ \ \forall\
p_0\in\partial D\ .\end{equation} Then the mapping $f$ is extended
to a homeomorphism of $\ \overline{D}$ onto $\ \overline{D^*}$.}
\end{theorem}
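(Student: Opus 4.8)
The plan is to obtain Theorem~\ref{th9} by merging two one-sided boundary extensions that are already available to us: the continuous extension of the inverse mapping $g=f^{-1}$ supplied by Theorem~\ref{th1}, and the continuous extension of the direct mapping $f$ supplied by Theorem~\ref{th5}. Once both extensions are in hand, the remaining task is purely topological: to glue them into a single homeomorphism between the closures $\overline{D}$ and $\overline{D^*}$. This is exactly the pattern by which Theorems~\ref{th6}--\ref{th8} are deduced, only now the direct-side input is the {\rm FMO} criterion rather than a divergence or integral criterion.

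First I would verify that the standing assumptions simultaneously satisfy the hypotheses of Theorem~\ref{th1} and of Theorem~\ref{th5}. By assumption we are under the hypotheses of Theorem~\ref{th1}, so $D$ is locally connected on its boundary and $\partial D^*$ is weakly flat. As observed in Section~4, weak flatness at a boundary point forces strong accessibility there, so $\partial D^*$ is strongly accessible and the hypotheses of Theorem~\ref{th2} hold as well. Adjoining the pointwise bound $K_f(p)\leqslant Q(p)$ with $Q\in {\rm FMO}(p_0)$ for every $p_0\in\partial D$ reproduces precisely the hypothesis of Theorem~\ref{th5}. Thus Theorem~\ref{th5} provides a continuous extension $\overline{f}\colon\overline{D}\to\overline{D^*}$ with $\overline{f}(\partial D)=\partial D^*$, while Theorem~\ref{th1} provides a continuous extension $\overline{g}\colon\overline{D^*}\to\overline{D}$ of $g$.

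Finally I would carry out the gluing. By the Uryson theorem the compactifications $\overline{\mathbb{S}}$ and $\overline{\mathbb{S}^*}$ are metrizable, and, as in the proof of Theorem~\ref{th1}, $\overline{D}$ and $\overline{D^*}$ are compact. The compositions $\overline{g}\circ\overline{f}$ and $\overline{f}\circ\overline{g}$ are continuous and agree with the identity on the dense subsets $D$ and $D^*$, hence coincide with the identity on all of $\overline{D}$ and $\overline{D^*}$, respectively; therefore $\overline{f}$ is a bijection of $\overline{D}$ onto $\overline{D^*}$ with continuous inverse $\overline{g}$, i.e.\ a homeomorphism. The genuinely substantive analysis is already encapsulated in Theorems~\ref{th1} and~\ref{th5}, so the only point demanding care here is the one I expect to be the main (though modest) obstacle: that the two independently constructed extensions are actually mutually inverse. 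This is not automatic from continuity of each alone; it must be obtained by promoting the relations $g\circ f={\rm id}$ and $f\circ g={\rm id}$ from the dense interiors to the closures via continuity, which in turn relies on the compactness of $\overline{D}$ and $\overline{D^*}$ to guarantee that the glued map is a homeomorphism rather than merely a continuous bijection.
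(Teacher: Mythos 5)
Your proposal is correct and is essentially the paper's own (implicit) argument: the paper derives Theorem~\ref{th9} precisely by combining Theorem~\ref{th1} (continuous extension of $g=f^{-1}$) with Theorem~\ref{th5} (the FMO criterion for the direct map, applicable since weak flatness of $\partial D^*$ implies strong accessibility), and then gluing the two extensions into mutually inverse continuous maps. One small remark: compactness is not actually needed at the end --- once $\overline{g}\circ\overline{f}$ and $\overline{f}\circ\overline{g}$ are identified with the identities via density and the Hausdorff property, $\overline{f}$ is a homeomorphism simply because its inverse is the continuous map $\overline{g}$.
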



\begin{corollary} {\it In particular, the conclusion of Theorem \ref{th9} holds if
\begin{equation}\label{eqh111}\limsup\limits_{\varepsilon\to 0}\
\dashint_{B(p_0,\,\varepsilon)}K_{f}(p)\ dh(p)\ <\ \infty\ \ \ \ \ \
\ \ \forall\ p_0\in\partial D\ .\end{equation}}
\end{corollary}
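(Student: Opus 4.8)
The plan is to derive this corollary straight from Theorem \ref{th9} by taking the majorant $Q$ occurring in its hypothesis (\ref{eqh1007}) to be $K_f$ itself. With that choice the inequality $K_f\leqslant Q$ is trivially satisfied, so the whole matter reduces to checking that the bounded-average hypothesis (\ref{eqh111}) forces $K_f\in{\rm FMO}(p_0)$ at each boundary point $p_0\in\partial D$ in the sense of (\ref{eqFMO}).

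First I would fix $p_0\in\partial D$ and write $\widetilde{(K_f)}_{\varepsilon}$ for the mean value of $K_f$ over the disk $B(p_0,\varepsilon)$, so that (\ref{eqh111}) reads $\limsup\limits_{\varepsilon\to 0}\widetilde{(K_f)}_{\varepsilon}<\infty$. Since $K_f\geqslant 0$ everywhere (recall that $K_f\geqslant 1$ in $D$ and $K_f\equiv 0$ outside $D$) and $\widetilde{(K_f)}_{\varepsilon}\geqslant 0$, the elementary bound $|a-b|\leqslant a+b$ valid for nonnegative $a,b$ gives the pointwise estimate $\bigl|K_f(p)-\widetilde{(K_f)}_{\varepsilon}\bigr|\leqslant K_f(p)+\widetilde{(K_f)}_{\varepsilon}$. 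Averaging this over $B(p_0,\varepsilon)$ yields
$$\dashint_{B(p_0,\,\varepsilon)}\bigl|K_f(p)-\widetilde{(K_f)}_{\varepsilon}\bigr|\,dh(p)\ \leqslant\ \widetilde{(K_f)}_{\varepsilon}+\widetilde{(K_f)}_{\varepsilon}\ =\ 2\,\widetilde{(K_f)}_{\varepsilon}\ .$$
Passing to the upper limit as $\varepsilon\to 0$ and using (\ref{eqh111}) on the right, I would conclude that the left-hand side remains bounded, i.e. $K_f\in{\rm FMO}(p_0)$. Applying Theorem \ref{th9} with $Q=K_f$ then delivers the desired extension of $f$ to a homeomorphism of $\ \overline{D}$ onto $\ \overline{D^*}$.

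There is no substantial analytic obstacle here, as the statement is a purely formal consequence of Theorem \ref{th9}. The one point deserving care is the implication ``bounded disk-averages of a nonnegative function $\Rightarrow$ finite mean oscillation'', which is exactly the elementary computation displayed above and is precisely the content of Corollary 4.1 in \cite{RSa} (see also Corollary 13.3 in \cite{MRSY}). Accordingly, one may instead shorten the argument to a single sentence invoking those references together with Theorem \ref{th9}.
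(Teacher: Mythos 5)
Your proposal is correct and follows essentially the same route as the paper: the corollary is obtained from Theorem \ref{th9} with $Q=K_f$, using the fact that bounded infinitesimal disk-averages of the nonnegative function $K_f$ imply $K_f\in{\rm FMO}(p_0)$, which is exactly Corollary 4.1 in \cite{RSa} (Corollary 13.3 in \cite{MRSY}) that the paper cites. Your explicit computation via $|a-b|\leqslant a+b$ is just the spelled-out content of that cited corollary, so nothing differs in substance.
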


\bigskip


\noindent
{\bf Vladimir Ryazanov, Sergei Volkov,}\\
Institute of Applied Mathematics and Mechanics\\
of National Academy of Sciences of Ukraine,\\
UKRAINE, 84116, Slavyansk, 19 General Batyuk Str.,\\
vl$\underline{\ \ }$\,ryazanov@mail.ru, sergey.v.volkov@mail.ru

\end{document}